\newtheorem{thm}{Theorem}[section] 
\newtheorem{lem}[thm]{Lemma}
\newtheorem{defi}[thm]{Definition}
\newtheorem{assum}[thm]{Assumption}
\newtheorem{exam}[thm]{Example}
\newtheorem{fact}[thm]{Fact}
\newcommand{\R}{\mathbb{R}}
\newcommand{\be}{\begin{equation}}
	\newcommand{\ee}{\end{equation}}
\newcommand{\en}{\begin{equation*}}
	\newcommand{\een}{\end{equation*}}
\newcommand{\eqn}{\begin{eqnarray}}
	\newcommand{\eeqn}{\end{eqnarray}}
\newcommand{\bmat}{\begin{bmatrix}}
	\newcommand{\emat}{\end{bmatrix}}
\newcommand{\btab}{\begin{tabular}}
	\newcommand{\etab}{\end{tabular}}
\newcommand{\iprod}[2]{\left \langle #1, #2 \right \rangle }
\newcommand{\vct}[1]{\boldsymbol{#1}}
\newcommand{\sgn}{\mathrm{sgn}}
\newcommand{\calG}{\mathcal{G}}
\newcommand{\calM}{\mathcal{M}}
\newcommand{\calN}{\mathcal{N}}
\newcommand{\calO}{\mathcal{O}}
\newcommand{\calP}{\mathcal{P}}
\newcommand{\calR}{\mathcal{R}}
\newcommand{\calS}{\mathcal{S}}
\newcommand{\calU}{\mathcal{U}}
\newcommand{\calX}{\mathcal{X}}
\newcommand{\vx}{\vct{x}}
\begin{document}

\title{Decentralized Weakly Convex Optimization Over \\ [0.2cm] the Stiefel Manifold}

\author{\name Jinxin Wang$^*$ \email  jxwang@se.cuhk.edu.hk \\
\addr Department of Systems Engineering and Engineering Management\\
The Chinese University of Hong Kong, Hong Kong, China
\AND
\name Jiang Hu$^*$ \email hujiangopt@gmail.com \\
\addr Massachusetts General Hospital and Harvard Medical School\\
Harvard University, Boston, USA 
\AND Shixiang Chen \email chenshxiang@gmail.com \\
\addr JD Explore Academy \\
Beijing, China
\AND Zengde Deng \email dengzengde@gmail.com \\
\addr Cainiao Network  \\
Hangzhou, China
\AND Anthony Man-Cho So \email manchoso@se.cuhk.edu.hk \\
\addr Department of Systems Engineering and Engineering Management\\
The Chinese University of Hong Kong, Hong Kong, China
}

\editor{}
\maketitle
\def\thefootnote{*}\footnotetext{Equal contributions.}

\begin{abstract}
	We focus on a class of non-smooth optimization problems over the Stiefel manifold in the \emph{decentralized} setting, where a connected network of $n$ agents cooperatively minimize a finite-sum objective function with each component being weakly convex in the ambient Euclidean space. Such optimization problems, albeit frequently encountered in applications, are quite challenging due to their non-smoothness and non-convexity. To tackle them, we propose an iterative method called the decentralized Riemannian subgradient method (DRSM). The global convergence and an iteration complexity of $\mathcal{O}(\varepsilon^{-2} \log^2(\varepsilon^{-1}))$ for forcing a natural stationarity measure below $\varepsilon$ are established via the powerful tool of \emph{proximal smoothness} from variational analysis, which could be of independent interest. Besides, we show the local linear convergence of the DRSM using geometrically diminishing stepsizes when the problem at hand further possesses a sharpness property. Numerical experiments are conducted to corroborate our theoretical findings.
\end{abstract}

\begin{keywords}
  Decentralized Optimization, Non-Smooth Manifold Optimization, Riemannian Subgradient Method, Iteration Complexity, Local Linear Convergence
\end{keywords}

\section{Introduction}\label{sec:intro}
Decentralized optimization has gained more and more attention during the past decades in various fields ranging from machine learning to control. There are several driving forces behind this. For instance, data storage and manipulation are required to be performed locally for the sake of privacy and security. Besides, it could be computationally prohibitive in modern big data applications if only a single centralized server performs all the computation. Instead, decentralized optimization has a strong potential to utilize many devices with low processing power cooperatively. In this paper, we consider the following problem of weakly convex (possibly non-smooth) optimization over the Stiefel manifold in a decentralized (i.e., multi-agent) manner:
\begin{equation}\label{eq: dwcoptstiefel0}
	\begin{aligned}
		& \min \;f(x) \coloneqq \frac{1}{n} \sum_{i=1}^{n} f_{i}\left(x\right) \\
		& \textrm{ s.t. } \; \ x \in \calM.
	\end{aligned}
\end{equation}
Here, each local component $f_{i}: \R^{d \times r} \rightarrow \R\ (i\in [n]\coloneqq\{1,\dots,n\})$ is assumed to be $\rho$-weakly convex in the ambient Euclidean space $\R^{d \times r}$ (recall that $g(\cdot)$ is $\rho$-weakly convex if $g(\cdot) + \frac{\rho}{2}\| \cdot\|_F^2$ is convex for some constant $\rho \geq 0$) and $\mathcal{M}\coloneqq\operatorname{St}(d, r)=\left\{x \in \R^{d \times r}: d \geq r, x^{\top} x=I_{r}\right\}$ is the Stiefel manifold. The formulation \eqref{eq: dwcoptstiefel0} has found wide applications including sparse principal component analysis \citep{chen2020proximal}, robust principal component analysis \citep[Eq. (69)]{wu2018review}, dual principal component pursuit \citep{zhu2018dual,li2021weakly}, and orthogonal dictionary learning \citep{wang2020unique,li2021weakly,chen2021manifold}.

In the multi-agent setting, there is a connected undirected communication network represented by the graph $\calG$. Each node of $\calG$ corresponds to an agent, and the network of $n$ agents aim to collectively solve \eqref{eq: dwcoptstiefel0}. 
The $i$-th agent holds a local copy $x_i$ of the variable $x$ in \eqref{eq: dwcoptstiefel0}. Let $\mathcal{N}_i$ be the neighborhood of $i$ including itself. For any $i\in [n]$ and $j \in \mathcal{N}_i$, the equality constraint $x_i = x_j$ is required. As $\calG$ is connected, we have the consensus constraint $x_1 = x_2 = \cdots = x_n$. Then, an equivalent reformulation of \eqref{eq: dwcoptstiefel0} is
\begin{equation}\label{eq: dwcoptstiefel}
	\begin{aligned}
		& \min \;f(\vx) \coloneqq \frac{1}{n} \sum_{i=1}^{n} f_{i}\left(x_{i}\right) \\
		& \textrm{ s.t. } \ x_{1}=x_{2}=\cdots=x_{n},\; x_{i} \in \calM, \quad \forall i \in [n],
	\end{aligned}
\end{equation}
where the variable $\vx^\top \coloneqq (x_1^\top, x_2^\top,\dots,x_n^\top) \in \R^{r \times nd}$. 
To obtain the consensual optimal solution to problem \eqref{eq: dwcoptstiefel}, each node of $\mathcal{G}$ needs to mix its local decision variable with its immediate neighbors according to predefined weights. We introduce a mixing matrix $W \in \mathbb{R}^{ n\times n}$ to model the mixing process, whose $(i,j)$-th entry $W_{ij}\ge 0$ represents the weight assigned to node $j$ by node $i$. The following assumption on $W$ is commonly used in decentralized learning.
\begin{assum}
	\label{assum: Connected graph}
	The  mixing matrix $W \in \mathbb{R}^{n \times n}$ is symmetric and doubly stochastic, that is, $W = W^\top, W \ge 0, \sum_{j \in [n]}W_{ij}=1$ for all $i \in [n]$. Moreover, the weight $W_{ij} = 0$ if and only if $j \notin \mathcal{N}_i$.
\end{assum}
An immediate consequence of the Perron-Frobenius theorem \citep{pillai2005perron} is that eigenvalues of $W$ lie in $(-1,1]$. In addition, the second-largest singular value $\sigma_{2}$ of $W$ satisfies $\sigma_{2} \in[0,1)$. 

\subsection{Related work}
Problem \eqref{eq: dwcoptstiefel}, a weakly convex optimization problem over the Stiefel manifold, can be non-smooth and non-convex, rendering it quite challenging to be solved. If the Stiefel manifold constraint is absent in problem \eqref{eq: dwcoptstiefel}, decentralized (sub)-gradient methods were studied in \citet{tsitsiklis1986distributed,nedic2010constrained,yuan2016convergence,zeng2018nonconvex,chen2021distributed} and a distributed dual averaging subgradient method was proposed in \citet{duchi2011dual,liu2022decentralized}. During the past few years, there have been significant efforts in designing provable decentralized algorithms for smooth optimization problems over the Stiefel manifold \citep{chen2021decentralized,ye2021deepca,wang2022decentralized}. Specifically, \citet{chen2021decentralized} developed a decentralized version of the Riemannian gradient method, \citet{ye2021deepca} studied a decentralized power method for solving the distributed principal component analysis problem, and \citet{wang2022decentralized} proposed a decentralized augmented Lagrangian method. The related works are summarized in Table \ref{tab: comparison}. In sharp contrast, the study of the decentralized non-smooth non-convex problem \eqref{eq: dwcoptstiefel} is still in its infancy. Previously, \citet{li2021weakly} established a convergence guarantee for the Riemannian subgradient method when solving the centralized counterpart \eqref{eq: dwcoptstiefel0}. In this work, we extend the results in \citet{li2021weakly} to the decentralized setting. Our technical developments make novel use of the consensus results in \citet{chen2021local} and the powerful tool of proximal smoothness (see Section \ref{sec: gloconv}) from variational analysis. 
\begin{table}[t] 
	\caption{Comparison with related works. ``s.t." means ``subject to", which indicates the feasible region.} \vspace{0.1cm}
	\centering
	\begin{tabular}{|c|c|c|}
		\hline \diagbox{s.t.}{$f_{i}$} & \makecell[c]{ $L$-smooth,\\ 
			non-convex} &\makecell[c]{Non-smooth, \\ 
			weakly convex}\\
		\hline  $\mathbb{R}^{d\times r}$&\makecell[c]{ DGD \\ \citep{zeng2018nonconvex}\\  }
	& \makecell[c]{ DPSM\\ 
		\citep{chen2021distributed}}    \\
	\hline $\mathcal{M}$ & \makecell[c]{DRSGD/DRGTA \\ \citep{chen2021decentralized} \\
		DESTINY \\ \citep{wang2022decentralized}
	}  & \makecell[c]{ \textbf{DRSM}\\ 
		(this paper)}   \\
	\hline
\end{tabular}
\label{tab: comparison}
\end{table}

\subsection{Our contributions}\label{subsec:oc}
In this paper, we propose the \emph{decentralized} Riemannian subgradient method (DRSM) for solving general \emph{weakly convex} optimization problems over the Stiefel manifold of the form \eqref{eq: dwcoptstiefel}. To the best of our knowledge, this is the first work to establish the global convergence and iteration complexity of a decentralized method for solving problem \eqref{eq: dwcoptstiefel}. To establish the convergence of DRSM, we carefully study several properties (particularly the Lipschitzness) of the proximal mapping defined on the Stiefel manifold. We believe that these properties (Lemma \ref{lem:lip}) can also be useful in other applications. Furthermore, we establish the local linear convergence of DRSM in Theorem \ref{thm: sharplinear} using geometrically diminishing stepsizes under the regularity condition of sharpness, thus contributing to existing analysis frameworks for obtaining strong convergence results under various regularity conditions in the decentralized setting \citep{huang2021distributed,tian2019asynchronous,daneshmand2020second,chen2021distributed}.

\section{Preliminaries}\label{sec:preli}
\subsection{Notation}
We use $\otimes$ to denote the Kronecker product. Given a vector $x$, we use $\|x \|_2$ and $\| x\|_1$ to denote its Euclidean norm and $\ell_1$-norm, respectively. Given a matrix $x$, we use $\|x \|_F$ to denote its Frobenius norm and $\|\vx\|_{F,\infty}$ with $\vx^\top \coloneqq (x_1^\top, x_2^\top,\dots,x_n^\top)  \in \R^{r \times nd}$ to denote the norm $\max_{i\in [n]} \|x_i \|_F$. We denote by $1_n$ the $n$-dimensional all-one vector. Given a symmetric matrix $W \in \mathbb{R}^{n \times n}$, we use $\lambda_2(W)$ and $\lambda_n(W)$ to denote its second-largest eigenvalue and smallest eigenvalue, respectively. For a nonempty closed set $\calX$, we use ${\rm dist}(x, \calX)\coloneqq \inf_{y \in \calX} \| x - y\|_F$ to denote the distance between a point $x$ and $\calX$.

\subsection{Subdifferential}
\label{subsec: subdifferential}
For any $i \in [n]$, since $f_i$ is $\rho$-weakly convex, there exists a convex function $g_i: \R^{d\times r} \rightarrow \R$ such that $f_i(x) = g_i(x) - \frac{\rho}{2}\|x\|_F^2$ for any $x \in \R^{d \times r}$. Similar to the convex case, we can define the (Euclidean) subdifferential $\partial f_i$ of $f_i$ as
\begin{align}
\partial f_i(x) = \partial g_i(x) - \rho x, \; \forall x \in \R^{d \times r}.
\end{align}
Since weakly convex functions are subdifferentially regular \citep[Section \uppercase\expandafter{\romannumeral4}]{li2020understanding}, according to the result in \citet[Theorem 5.1]{yang2014optimality}, the Riemannian subdifferential $\partial_{\calR} f_i$ of $f_i$ on the Stiefel manifold $\calM$ is given by
\begin{align}
\partial_{\calR} f_i(x) = \calP_{{\rm T}_{x}\calM}(\partial f_i(x)),\; \forall x \in \calM,
\end{align}
where ${\rm T}_x\calM$ is the tangent space to $\mathcal{M}$ at $x$ and $\calP_{{\rm T}_{x}\calM}(y) = y - \frac{1}{2}x(y^\top x +x^\top y)$ is the projection of $y \in \mathbb{R}^{d \times r}$ onto ${\rm T}_x\mathcal{M}$.

The following Riemannian subgradient inequality is useful in establishing convergence results of subgradient-type methods for minimizing weakly convex problems over the Stiefel manifold.
\begin{lem}[Riemannian subgradient inequality \citep{li2021weakly}] \label{lem: riessubgineq}
For any bounded open convex set $\calU$ that contains $\calM$, there exists a constant $L >0$ such that each $f_i$ is $L$-Lipschitz continuous on $\calU$ and satisfies, for any $x,y \in \calM $ and any $ \tilde{\nabla}_\calR f_i(x) \in \partial_\calR f_i(x)$, that
\begin{align} \label{eq: riessubgineq}
	f_i(y) \geq f_i(x) + \langle \tilde{\nabla}_\calR f_i(x), y-x \rangle - \frac{\rho+L}{2} \| y -x \|_F^2.
\end{align}
\end{lem}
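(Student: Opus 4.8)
The plan is to split the statement into three parts: a uniform Lipschitz estimate, the classical weakly convex subgradient inequality in the ambient space, and a geometric estimate controlling the \emph{normal} component of a Euclidean subgradient along displacements within $\calM$.

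\textbf{Uniform Lipschitzness.} Since $f_i = g_i - \frac{\rho}{2}\|\cdot\|_F^2$ with $g_i$ finite-valued and convex on $\R^{d\times r}$, $g_i$ is Lipschitz on the compact set $\closure(\calU)$, and the quadratic correction has gradient $\rho x$, which is bounded on the bounded set $\calU$; hence $f_i$ is $L_i$-Lipschitz on $\calU$ for some $L_i$, and I take $L \coloneqq \max_{i\in[n]} L_i$ (finitely many agents). A standard consequence is that every $v \in \partial f_i(x)$ with $x\in\calU$ satisfies $\|v\|_F \le L$, as $\partial f_i = \partial g_i - \rho x$ is the limiting subdifferential of the $L$-Lipschitz function $f_i$.

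\textbf{Ambient subgradient inequality.} Fix $x, y \in \calM$, and since $\tilde\nabla_\calR f_i(x) \in \calP_{{\rm T}_x\calM}(\partial f_i(x))$, pick $u \in \partial f_i(x)$ with $\tilde\nabla_\calR f_i(x) = \calP_{{\rm T}_x\calM}(u)$. Then $u + \rho x \in \partial g_i(x)$, so convexity of $g_i$ gives $g_i(y) \ge g_i(x) + \langle u + \rho x, y - x\rangle$. Substituting $g_i = f_i + \frac{\rho}{2}\|\cdot\|_F^2$ and simplifying via the identity $\rho\langle x, y-x\rangle + \frac{\rho}{2}\|x\|_F^2 - \frac{\rho}{2}\|y\|_F^2 = -\frac{\rho}{2}\|y-x\|_F^2$ yields $f_i(y) \ge f_i(x) + \langle u, y-x\rangle - \frac{\rho}{2}\|y-x\|_F^2$.

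\textbf{Geometric step (the main obstacle).} Split $u = \tilde\nabla_\calR f_i(x) + \xi$ with $\xi \coloneqq u - \calP_{{\rm T}_x\calM}(u)$ in the normal space $\mathrm{N}_x\calM = \{xS : S = S^\top \in \R^{r\times r}\}$, and write $\xi = xS$. Using that $\trace(SM) = \trace\big(S\tfrac{M+M^\top}{2}\big)$ for symmetric $S$, together with $x^\top x = y^\top y = I_r$, one computes
\[
\langle \xi, y - x\rangle = \trace\!\Big(S\cdot\tfrac{x^\top(y-x) + (y-x)^\top x}{2}\Big) = -\tfrac{1}{2}\trace\!\big(S(x-y)^\top(x-y)\big),
\]
since $x^\top(y-x) + (y-x)^\top x = -(x-y)^\top(x-y)$. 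Bounding the trace by $\|S\|_{\mathrm{op}}\|x-y\|_F^2$ and using $\|S\|_{\mathrm{op}} \le \|S\|_F = \|xS\|_F = \|\xi\|_F \le \|u\|_F \le L$ gives $|\langle \xi, y-x\rangle| \le \frac{L}{2}\|y-x\|_F^2$. Inserting $\langle u, y-x\rangle = \langle \tilde\nabla_\calR f_i(x), y-x\rangle + \langle \xi, y-x\rangle$ into the ambient inequality yields \eqref{eq: riessubgineq}.

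I expect the geometric step to be the only non-routine ingredient: the key realization is that a normal vector at $x$ pairs with the chord $y-x$ between two points of the Stiefel manifold only to \emph{second order} in $\|y-x\|_F$, which is precisely what upgrades the ambient modulus $\rho$ to $\rho+L$. The remaining pieces — uniform Lipschitzness and the ambient weakly convex inequality — are standard manipulations; the only point needing a little care is that the subgradient norm bound and the Lipschitz constant can be taken to be the same $L$ for every $i$, which holds because $n$ is finite.
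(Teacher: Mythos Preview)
Your argument is correct. Note, however, that the paper does not actually prove this lemma: it is quoted verbatim from \citet{li2021weakly} and used as a black box, so there is no ``paper's own proof'' to compare against. Your three-step decomposition (uniform Lipschitzness from local Lipschitzness of finite convex functions, the ambient weakly convex subgradient inequality, and the second-order control of the normal component) is exactly the natural route and matches the derivation in the cited source. One remark worth making is that your geometric step---showing $|\langle \xi, y-x\rangle| \le \tfrac{L}{2}\|y-x\|_F^2$ for $\xi \in \mathrm{N}_x\calM$ with $\|\xi\|_F \le L$---is precisely the uniform normal inequality \eqref{eq: uniform-norm-ineq} that the paper invokes later (Section~\ref{sec: gloconv}) for $R$-proximally smooth sets, specialized here to the Stiefel manifold with $R=1$; you have simply re-derived it by an explicit trace calculation rather than appealing to proximal smoothness. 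Either route is fine, and your direct computation has the virtue of being fully self-contained.
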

An immediate consequence is that the norm of any Riemannian subgradient of $f_i$ is bounded, i.e., $\| \tilde{\nabla}_\calR f_i(x)\|_F \leq L$ \citep[Theorem 9.13]{rockafellar2009variational}.

\subsection{Stationarity measure}
We denote the Euclidean average of the points $x_1,\dots$, $x_n \in \R^{d \times r}$ as $\hat{x}\coloneqq \frac{1}{n} \sum_{i=1}^n x_i$. Since the feasible set of problem \eqref{eq: dwcoptstiefel} is non-convex, the Euclidean average $\hat{x}$ need not be on $\mathcal{M}$ even if $x_1,\dots$, $x_n \in \calM$. Thus, to measure the consensus error, we appeal to the induced arithmetic mean (IMA) on the Stiefel manifold, which is defined as
\begin{align}
\bar{x} \coloneqq \calP_\calM(\hat{x}) \in \mathop{\arg\min}\limits_{y \in \calM} \sum_{i=1}^n \| y - x_i \|_F^2.
\end{align}
It is worth pointing out that the IMA need not be unique. To measure the degree of stationarity of $\bar{\vx} \coloneqq 1_n \otimes \bar{x}$ for problem \eqref{eq: dwcoptstiefel}, we utilize the following manifold analogs of the Moreau envelope and proximal mapping, which are defined as
\begin{align} \label{eq: moresuenvelop}
\begin{cases}
	f_{\lambda}(x) \coloneqq \min\limits_{y \in \calM}\left\{f(y)+\frac{1}{2 \lambda}\|y - x\|_{F}^{2}\right\}, & x \in \calM, \\ P_{\lambda f}(x) \in \underset{y \in \calM}{\mathop{\arg \min}}\left\{f(y)+\frac{1}{2 \lambda}\|y-x\|_{F}^{2}\right\}, & x \in \calM
\end{cases}
\end{align}
for $\lambda > 0$, respectively. Following the discussion in \citet{li2021weakly}, we note that by the first-order optimality conditions of \eqref{eq: moresuenvelop}, we have
\begin{align} \label{eq: moreausurromeasure}
{\rm dist}(0, \partial_\calR f(P_{\lambda f}(x)))
\leq  \frac{1}{\lambda} \| \calP_{{\rm T}_{P_{\lambda f}(x)}\calM}(P_{\lambda f}(x) - x) \|_F \leq  \frac{1}{\lambda} \| P_{\lambda f}(x) - x\|_F.
\end{align}
In particular, if the term $\frac{1}{\lambda} \| P_{\lambda f}(x) - x\|_F$ in \eqref{eq: moreausurromeasure} is smaller than $\varepsilon$, then $P_{\lambda f}(x)$ is a near-stationary point (due to ${\rm dist}(0, \partial_\calR f(P_{\lambda f}(x))) \leq \varepsilon$) and $x$ is close to the near-stationary point $P_{\lambda f}(x)$ as well. Therefore, it is natural to employ $\frac{1}{\lambda} \| P_{\lambda f}(x) - x\|_F$ as an approximate stationarity measure of $x$. The above discussion motivates us to define the following $\varepsilon$-stationarity for problem \eqref{eq: dwcoptstiefel}.

\begin{defi}[$\varepsilon$-stationarity]
\label{def: epsstationary}
A point $\vx^\top = (x_1^\top, x_2^\top,\dots,x_n^\top) \in \R^{r \times nd}$ is an $\varepsilon$-stationary point of problem \eqref{eq: dwcoptstiefel} if the following two conditions hold:
\begin{align}
	& \frac{1}{n} \sum_{i=1}^n \| x_i - \bar{x}\|_F^2 \leq \varepsilon, \\
	& \frac{1}{\lambda^2} \| P_{\lambda f}(\bar{x}) - \bar{x}\|_F^2 \leq \varepsilon.
\end{align}
\end{defi}
The following property of projection onto the manifold $\calM$ will be used in the theoretical analysis later.
\begin{lem}\label{lem: liplikepro} \citep[Lemma 2]{liu2022unified}
For $x \in \calM$ and $y \in \R^{d \times r}$, we have $\| x - \calP_\calM(y)\|_F \leq 2\| x - y\|_F$.
\end{lem}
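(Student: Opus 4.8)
The plan is to prove Lemma \ref{lem: liplikepro} directly from the variational characterization of $\calP_\calM$ together with the triangle inequality; no manifold-specific structure beyond compactness of $\calM$ is actually needed, so the argument is short.

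First I would recall that, since $\calM = \stiefel(d,r)$ is a nonempty compact set, the projection $\calP_\calM(y)$ is well-defined (the infimum defining $\dist(y,\calM)$ is attained); uniqueness is irrelevant here, as the claimed bound will hold for any element of the projection set. By definition of the projection, $\calP_\calM(y)$ is a minimizer of $z \mapsto \|z - y\|_F$ over $z \in \calM$, so in particular, comparing with the feasible point $x \in \calM$,
\begin{align}
	\| y - \calP_\calM(y) \|_F = \dist(y, \calM) \leq \| y - x \|_F.
\end{align}

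Next I would combine this with the triangle inequality for the Frobenius norm:
\begin{align}
	\| x - \calP_\calM(y) \|_F \leq \| x - y \|_F + \| y - \calP_\calM(y) \|_F \leq \| x - y \|_F + \| x - y \|_F = 2 \| x - y \|_F,
\end{align}
which is exactly the desired estimate. This completes the proof.

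There is no real obstacle in this argument — the only point worth a sentence is the well-definedness of $\calP_\calM(y)$, which follows from compactness of $\calM$, and the observation that the bound is insensitive to the (possible) non-uniqueness of the projection. I would note in passing that the same one-line proof shows $\|x - \calP_\calC(y)\|_F \leq 2\|x-y\|_F$ for an arbitrary nonempty closed set $\calC$ and any $x \in \calC$, so the lemma is not specific to the Stiefel manifold; it is stated in this form only because that is how it will be invoked in the sequel.
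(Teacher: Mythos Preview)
Your proof is correct. The paper itself does not supply a proof of this lemma at all --- it simply cites it from \citet[Lemma~2]{liu2022unified} --- so there is no ``paper's own proof'' to compare against. Your argument (triangle inequality plus the variational characterization $\|y-\calP_\calM(y)\|_F \le \|y-x\|_F$ for any $x\in\calM$) is the standard elementary one, and, as you correctly observe, it works for an arbitrary nonempty closed set, not just the Stiefel manifold.
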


\subsection{Consensus on the Stiefel manifold}
\label{subsec: consensusstiefel}
The consensus problem over the Stiefel manifold $\calM$ is to minimize the weighted squared distance among all local variables, which can be formulated as
\begin{equation}\label{eq: consensusstiefel}
\begin{aligned}
	& \min \; \varphi^t(\vx) \coloneqq \frac{1}{4} \sum_{i=1}^{n} \sum_{j=1}^n W^t_{ij} \| x_i - x_j\|_F^2 \\
	& \textrm{ s.t. } \ x_{i} \in \mathcal{M}, \quad \forall i \in [n].
\end{aligned}\tag{C-St}
\end{equation}
Here, $W^t$ denotes the $t$-th power of the doubly stochastic matrix $W$ with $t\geq 1$ being an integer. The Riemannian gradient method DRCS proposed in \citet{chen2021local} for problem \eqref{eq: consensusstiefel} is given by
\begin{align} \label{eq: drcs}
x_{i,k+1} =  \calR_{x_{i,k}}(-\alpha \operatorname{grad} \varphi^t_i(\vx_k))  =\calR_{x_{i,k}} (\alpha \calP_{{\rm T}_{x_{i,k}}\calM} (\sum_{j=1}^n W^t_{ij} x_{j,k} ) ),  \tag{DRCS}
\end{align}
where $\operatorname{grad} \varphi^t_i(\vx_k) \in \mathbb{R}^{d \times r}$ represents the Riemannian gradient of $\varphi^t$ with respect to $x_{i,k}$ and $\calR_{x_{i,k}}(\cdot)$ is a retraction operator. We refer the reader to \citet{absil2009optimization,hu2020brief} for optimization on manifold. In the sequel, we only use the polar decomposition-based retraction to simplify theoretical analysis. Although problem \eqref{eq: consensusstiefel} is non-convex, it has been proved in \citet{markdahl2020high} that DRCS with random initialization will converge almost surely to a consensus solution when $r \leq \frac{2}{3}d -1$. Moreover, it has been shown in \citet{chen2021local} that DRCS converges Q-linearly in a local region. Specifically, define
\begin{align}
\calN &\coloneqq \mathcal{N}_{1} \cap \mathcal{N}_{2},  \label{eq: defN}\\
\calN_1 & \coloneqq\left\{\vx:\|\vx-\bar{\vx}\|_{\mathrm{F}}^{2} \leq n \delta_{1}^{2}\right\}, \\
\calN_2 & \coloneqq \left\{\vx:\|\vx-\bar{\vx}\|_{\mathrm{F}, \infty} \leq \delta_{2}\right\},
\end{align}
where $\delta_1, \delta_2$ satisfy $\delta_1 \leq \frac{1}{5\sqrt{r}}\delta_2$ and $\delta_2 \leq \frac{1}{6}$. We have the following local linear convergence result.

\begin{fact}[Linear convergence rate \citep{chen2021local}]\label{thm:linear_rate_consensus}
Suppose that Assumption \ref{assum: Connected graph} holds. Let the stepsize $\alpha$ satisfy $0<\alpha\leq \bar\alpha \coloneqq\min\{\nu \frac{\Phi}{ L_t}, 1,\frac{1}{M}\}$ and $t\geq \lceil\log_{\sigma_2}(\frac{1}{2\sqrt{n}})\rceil$, where $\nu\in[0,1],$ $\Phi=2-\delta_2^2$, $L_t = 1-\lambda_n(W^t) \in (0,2]$, and $M$ is a finite constant depending on the specific choice of the retraction; see \citet[Lemma 2; Remark 1]{chen2021local} and \citet[Appendix B]{boumal2019global}.
The sequence of iterates $\{\vx_k \}$ generated by \eqref{eq: drcs} achieves consensus with a linear convergence rate if the initialization satisfies 
$\vx_0\in \calN$ defined by \eqref{eq: defN}. That is, we have  $\vx_k\in\calN$ for all $k\geq 0$ and 
\begin{align}\label{linear rate of consensus}			
	\|\vx_{k+1} - \bar{\vx}_{k+1}\|_F
	&\leq \rho_t \|\vx_{k} - \bar{\vx}_{k}\|_F,
\end{align}
where $\bar{\vx}_k \coloneqq 1_n \otimes \left(\mathcal{P}_{\calM}(\frac{1}{n} \sum_{i=1}^n x_{i,k} ) \right)$, $\rho_t \coloneqq \sqrt{1-2(1-\nu) \alpha  \gamma_{t}},$ $\mu_t = 1-\lambda_2(W^t)$, and $\gamma_t = (1- 4r\delta_{1}^2) (1- \frac{\delta_{2}^2}{2}) \mu_t \geq \frac{\mu_t}{2} \geq \frac{1-\sigma_2^t}{2}$. 
\end{fact}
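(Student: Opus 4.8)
\emph{Reformulation and plan.} Equation~\eqref{eq: drcs} already displays DRCS as the Riemannian gradient descent iteration $x_{i,k+1}=\calR_{x_{i,k}}\big({-}\alpha\,\operatorname{grad}\varphi^t_i(\vx_k)\big)$, where $\operatorname{grad}\varphi^t_i(\vx)=\calP_{{\rm T}_{x_i}\calM}\big(x_i-\sum_j W^t_{ij}x_j\big)$ (using $\sum_j W^t_{ij}=1$ and $\calP_{{\rm T}_{x_i}\calM}(x_i)=0$) and $\calR$ is the polar retraction. I would prove the two claims — that $\vx_k\in\calN$ for all $k$ and that the contraction~\eqref{linear rate of consensus} holds — by a single induction on $k$, driven by two ingredients: a local \emph{restricted secant inequality} for $\varphi^t$ valid on $\calN$, and a \emph{sufficient-decrease} estimate for the polar retraction.

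\emph{Restricted secant inequality.} The crux is to show that for every $\vx\in\calN$,
\[
\sum_{i=1}^n\big\langle\operatorname{grad}\varphi^t_i(\vx),\,x_i-\bar x\big\rangle\;\ge\;\gamma_t\,\|\vx-\bar{\vx}\|_F^2,\qquad \bar x=\calP_\calM\Big(\tfrac{1}{n}\textstyle\sum_i x_i\Big),
\]
with $\gamma_t=(1-4r\delta_1^2)(1-\tfrac{\delta_2^2}{2})\mu_t$. Using the self-adjointness of $\calP_{{\rm T}_{x_i}\calM}$ I would split $\langle\operatorname{grad}\varphi^t_i(\vx),x_i-\bar x\rangle=\langle x_i-\sum_j W^t_{ij}x_j,\,x_i-\bar x\rangle-\langle x_i-\sum_j W^t_{ij}x_j,\,(I-\calP_{{\rm T}_{x_i}\calM})(x_i-\bar x)\rangle$. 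Summing the first terms and invoking $W^t1_n=1_n$ collapses them to the ambient quadratic form $\langle\big((I-W^t)\otimes I_d\big)(\vx-\hat{\vx}),\,\vx-\hat{\vx}\rangle\ge\mu_t\|\vx-\hat{\vx}\|_F^2$, where $\hat{\vx}=1_n\otimes\hat x$, $\hat x=\tfrac{1}{n}\sum_i x_i$, and $\mu_t=1-\lambda_2(W^t)$; then the Pythagorean identity $\|\vx-\hat{\vx}\|_F^2=\|\vx-\bar{\vx}\|_F^2-n\,\operatorname{dist}(\hat x,\calM)^2$ together with the curvature estimate $\operatorname{dist}(\hat x,\calM)^2=O\big(\tfrac{1}{n^2}\|\vx-\bar{\vx}\|_F^4\big)$ (a consequence of $x_i^\top x_i=I_r$, i.e.\ of the proximal smoothness of $\calM$) and the bound $\|\vx-\bar{\vx}\|_F^2\le n\delta_1^2$ on $\calN_1$ produces the factor $1-4r\delta_1^2$. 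The second terms are controlled because $(I-\calP_{{\rm T}_{x_i}\calM})(x_i-\bar x)=\tfrac12 x_i(x_i-\bar x)^\top(x_i-\bar x)$ has Frobenius norm $O(\|x_i-\bar x\|_F^2)$, which on $\calN_2$ (where $\|x_i-\bar x\|_F\le\delta_2$) costs only the factor $1-\tfrac{\delta_2^2}{2}$. The hypotheses $\delta_1\le\tfrac{1}{5\sqrt{r}}\delta_2$, $\delta_2\le\tfrac16$, and $t\ge\lceil\log_{\sigma_2}(\tfrac{1}{2\sqrt{n}})\rceil$ (so $\sigma_2^t\le\tfrac{1}{2\sqrt{n}}$, whence $\mu_t\ge1-\sigma_2^t$) are precisely what make these estimates close and deliver $\gamma_t\ge\mu_t/2\ge(1-\sigma_2^t)/2$.

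\emph{Descent and contraction.} Since $\bar{\vx}_{k+1}$ is a nearest consensus point to $\vx_{k+1}$ on $\calM^n$, it suffices to bound $\sum_i\|x_{i,k+1}-\bar x_k\|_F^2$. Using the second-order boundedness of the polar retraction, $\|\calR_x(\xi)-(x+\xi)\|_F\le M\|\xi\|_F^2$, gives
\[
\|x_{i,k+1}-\bar x_k\|_F^2\le\|x_{i,k}-\bar x_k\|_F^2-2\alpha\big\langle\operatorname{grad}\varphi^t_i(\vx_k),\,x_{i,k}-\bar x_k\big\rangle+O(\alpha^2)\,\|\operatorname{grad}\varphi^t_i(\vx_k)\|_F^2.
\]
Summing over $i$, applying the restricted secant inequality to the linear term, and using $\sum_i\|\operatorname{grad}\varphi^t_i(\vx_k)\|_F^2\le\|\big((I-W^t)\otimes I_d\big)(\vx_k-\bar{\vx}_k)\|_F^2\le L_t^2\|\vx_k-\bar{\vx}_k\|_F^2$ for the quadratic term, one obtains, once $\alpha\le\bar\alpha$ is small enough that the second-order remainder is absorbed into $2\nu\alpha\gamma_t\|\vx_k-\bar{\vx}_k\|_F^2$ (and $2(1-\nu)\alpha\gamma_t<1$), the estimate $\|\vx_{k+1}-\bar{\vx}_{k+1}\|_F^2\le(1-2(1-\nu)\alpha\gamma_t)\|\vx_k-\bar{\vx}_k\|_F^2$; this is~\eqref{linear rate of consensus} with $\rho_t=\sqrt{1-2(1-\nu)\alpha\gamma_t}\in[0,1)$. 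Matching this threshold with the stated $\bar\alpha=\min\{\nu\Phi/L_t,1,1/M\}$, $\Phi=2-\delta_2^2$, is where $L_t=1-\lambda_n(W^t)$ and the retraction constant $M$ enter.

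\emph{Invariance of $\calN$ and the main obstacle.} The contraction gives $\|\vx_{k+1}-\bar{\vx}_{k+1}\|_F\le\|\vx_k-\bar{\vx}_k\|_F$, so $\vx_{k+1}\in\calN_1$; for $\calN_2$ one bounds $\|x_{i,k+1}-\bar x_{k+1}\|_F\le\|x_{i,k+1}-\bar x_k\|_F+\|\bar x_{k+1}-\bar x_k\|_F$, the first term via the retraction and $\alpha\le1$, the second via Lemma~\ref{lem: liplikepro}, and uses $\delta_1\le\tfrac{1}{5\sqrt{r}}\delta_2$ and $\delta_2\le\tfrac16$ to keep the total below $\delta_2$ — closing the induction. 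I expect the restricted secant inequality on the non-convex set $\calN$ to be the main obstacle: it must weld together the ambient spectral gap of $W^t$, the passage from the Euclidean mean $\hat x$ to the manifold IMA $\bar x$ (hence the proximal smoothness of $\calM$), and the agent-dependent tangent projections $\calP_{{\rm T}_{x_i}\calM}$, producing the single explicit constant $\gamma_t$. The remaining pieces — the descent estimate, the calibration of $\bar\alpha$, and the neighborhood invariance — are careful but essentially routine once the restricted secant inequality is available.
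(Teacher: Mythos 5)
This statement is imported verbatim as a Fact from \citet{chen2021local}; the present paper gives no proof of it, so there is no in-paper argument to compare against. Your sketch reconstructs the same architecture as the cited reference---a restricted-secant (Lipschitz-type) inequality for $\varphi^t$ on $\calN$ yielding the constant $\gamma_t$, a second-order bound for the polar retraction, the optimality property of the IMA to pass from $\bar{\vx}_k$ to $\bar{\vx}_{k+1}$, and an induction keeping $\vx_k\in\calN$---so it is essentially the standard route rather than a genuinely different one, and the individual steps you outline (the Pythagorean split through $\hat x$, the spectral bound via $W^t 1_n=1_n$, the normal-component estimate $\tfrac12 x_i(x_i-\bar x)^\top(x_i-\bar x)$) are all sound. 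The one caveat is that the pieces you label ``careful but essentially routine''---closing the constants to obtain exactly $\gamma_t=(1-4r\delta_1^2)(1-\tfrac{\delta_2^2}{2})\mu_t$, calibrating $\bar\alpha=\min\{\nu\Phi/L_t,1,1/M\}$, and the $\calN_2$-invariance arithmetic under $\delta_1\le\tfrac{1}{5\sqrt{r}}\delta_2$, $\delta_2\le\tfrac16$---are precisely the technical content of the cited lemmas and remain asserted rather than carried out in your sketch.
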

If $\nu=1/2$, we have $\alpha\leq \bar\alpha \coloneqq \min\{\frac{\Phi}{2L_t},1,1/M\}$ and $\rho_t=\sqrt{1-\gamma_t\alpha } <1 .$

\section{Decentralized Riemannian Subgradient Method}
Motivated by the local linear convergence result of consensus on the Riemannian manifold \citep{chen2021local}, our proposed DRSM proceeds as follows. At the $k$-th iteration, it performs a consensus step and then updates the local variable using a Riemannian subgradient direction, i.e.,
for $i\in [n]$,
\begin{align} \label{eq: algiteratedrsm}
x_{i,k+1} & = \calR_{x_{i,k}}(-\alpha \operatorname{grad} \varphi^t_i(\vx_k)  - \beta_k \tilde{\nabla}_\calR f_i(x_{i,k})  )   \nonumber \\
& = \calR_{x_{i,k}} (\alpha \calP_{{\rm T}_{x_{i,k}}\calM}({\footnotesize \sum_{j=1}^n} W_{ij}^t x_{j,k}) - \beta_k \tilde{\nabla}_\calR f_i(x_{i,k}) ),\tag{DRSM}
\end{align}
where $\tilde{\nabla}_\calR f_i(x_{i,k})$ is a Riemannian subgradient of $f_i$ at the point $x_{i,k}$, $\alpha$ and $\beta_k$ are stepsizes to be determined shortly, and $t \geq 1$ is an integer denoting the $t$-th power of the mixing matrix $W$ (i.e., performing multistep consensus). We summarized the algorithm in Algorithm \ref{alg: DRSM}.

\begin{algorithm}[!htbp]
\caption{Decentralized Riemannian Subgradient Method (DRSM) for Solving Problem \eqref{eq: dwcoptstiefel}}  
\begin{algorithmic}[1]  
	\STATE \textbf{Input:} $\vx_0 \in \mathcal{N}$, an integer $t \geq \log _{\sigma_2}\left(\frac{1}{2 \sqrt{n}}\right)$, $0 < \alpha \leq \bar{\alpha}$ with $\bar{\alpha}$ being given in Fact \ref{thm:linear_rate_consensus}.
	\FOR{$k=1,2,\dots$ \{each node $i \in [n]$ in parallel\}}
	\STATE Choose diminishing stepsizes $\beta_k = \calO(1/\sqrt{k})$.
	\STATE Perform the update according to \eqref{eq: algiteratedrsm}.
	\ENDFOR
\end{algorithmic}
\label{alg: DRSM}
\end{algorithm}
One can view the update \eqref{eq: algiteratedrsm} as applying the Riemannian subgradient method to the following penalized version of problem \eqref{eq: dwcoptstiefel}:
\begin{align*}
\min_{x_i \in \calM} \beta_k f(\vx) + \alpha \varphi^t (\vx).
\end{align*}
To gradually approach consensus, we need to increase the effect of $\varphi^t$, or equivalently, decrease the effect of $f$. Therefore, $\beta_k$ should be diminishing. We will formally specify this requirement later.

\section{Global Convergence} \label{sec: gloconv}
In this section, we establish the global convergence of the DRSM. A key ingredient involved is to bound the difference between two values of the proximal mapping $P_{\lambda f}$ in \eqref{eq: moresuenvelop} at two different points. Unlike the convex or weakly convex cases in the Euclidean setting \citep{moreau1965proximite,chen2021distributed}, it is unclear whether the proximal mapping in \eqref{eq: moresuenvelop} is single-valued due to the non-convex constraint, let alone the associated Lipschitzian property. However, we show that the proximal mapping in \eqref{eq: moresuenvelop} is indeed single-valued, despite the existence of the non-convex manifold constraint. Moreover, we show that it is Lipschitz continuous using the so-called proximal smoothness of $\calM$ and the weak convexity of $f$. These results are new and may have further applications in the study of decentralized non-smooth optimization  over a manifold. We begin with the following definition.


\begin{defi}
A closed set $\calX$ is $R$-proximally smooth if the projection mapping $\calP_{\calX}(x)$ is a singleton whenever ${\rm dist}(x,\calX) < R$.
\end{defi}
Let us take the Stiefel manifold $\calM = {\rm St}(d,r)$ as an example. In this case, $R=1$ is the largest possible constant \citep[Proposition 3]{balashov2020error}. For a $R$-proximally smooth set, there are two useful properties \citep{clarke1995proximal,davis2020stochastic}. One is that the projection operator is Lipschitz continuous, that is, for all $x,y \in U_{\calX}(r)$ with $0 <r< R$, it holds that
\begin{align} \label{eq: proximallysmoothsetLips}
\| \calP_\calX(x) - \calP_\calX(y) \| \leq \frac{R}{R-r} \|x - y \|,
\end{align}
where $U_{\calX}(r)$ is the $r$-tube around $\calX$ defined as $U_{\calX}(r)\coloneqq \{y \in \R^{d \times r}:{\rm dist}(y,\calX) < r \}$. Another one is the uniform normal inequality, that is, for any point $x\in \calX$ and a normal vector $v \in N_{\calX}(x)$, the inequality 
\begin{align} \label{eq: uniform-norm-ineq}
\langle v, y-x \rangle \leq \frac{\|v\|}{2R}\cdot \| y-x\|^2
\end{align}
holds for all $y \in \calX$.
A local version of the above normal inequality is shown in \citet[Exercise 13.31(b)]{rockafellar2009variational}.

Based on the proximal smoothness of the Stiefel manifold $\calM$ and the weak convexity of $f$, we show in the following lemma that $P_{\lambda f}$ in \eqref{eq: moresuenvelop} is single-valued and Lipschitz continuous, which turns out to be essential in obtaining the convergence guarantee of \eqref{eq: algiteratedrsm}.
\begin{lem} \label{lem:lip}
Let $f:\R^{d\times r} \rightarrow \R$ be a $\rho$-weakly convex function. Suppose that $f$ is $L$-Lipschitz continuous on some bounded open convex set $\mathcal{U}$ containing the Stiefel manifold $\calM$. Then, for any $\lambda \in (0, (\rho + 3L)^{-1})$, the proximal mapping $P_{\lambda f}$ defined in \eqref{eq: moresuenvelop} is single-valued and Lipschitz continuous with modulus $(1-\lambda (\rho + 3L))^{-1}$ over $\calM$. 
\end{lem}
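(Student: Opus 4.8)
The plan is to work with the auxiliary function $h(x) \coloneqq f(x) + \frac{\rho+L}{2}\|x\|_F^2$ restricted to $\calM$, or rather to recast the defining minimization of $P_{\lambda f}$ in a way that exposes a convexity-type argument. Fix $x \in \calM$ and let $y^\star \in P_{\lambda f}(x)$, so that $y^\star$ minimizes $y \mapsto f(y) + \frac{1}{2\lambda}\|y-x\|_F^2$ over $\calM$. The first-order optimality condition on the manifold gives $0 \in \partial_\calR f(y^\star) + \frac{1}{\lambda}\calP_{{\rm T}_{y^\star}\calM}(y^\star - x)$, i.e. there is a Riemannian subgradient $\tilde\nabla_\calR f(y^\star)$ with $\calP_{{\rm T}_{y^\star}\calM}(x - y^\star) = \lambda\,\tilde\nabla_\calR f(y^\star)$. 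First I would combine this with the Riemannian subgradient inequality \eqref{eq: riessubgineq} (which holds since $f$ inherits weak convexity and $L$-Lipschitzness from its components; the same proof as Lemma \ref{lem: riessubgineq} applies to the average $f$): for any competitor $z \in \calM$,
\[
f(z) \geq f(y^\star) + \langle \tilde\nabla_\calR f(y^\star), z - y^\star\rangle - \tfrac{\rho+L}{2}\|z-y^\star\|_F^2.
\]
Substituting $\tilde\nabla_\calR f(y^\star) = \frac{1}{\lambda}\calP_{{\rm T}_{y^\star}\calM}(x-y^\star)$ and using that $\langle \calP_{{\rm T}_{y^\star}\calM}(x-y^\star), z-y^\star\rangle = \langle x-y^\star, \calP_{{\rm T}_{y^\star}\calM}(z-y^\star)\rangle$ together with the proximal smoothness estimate \eqref{eq: uniform-norm-ineq} applied to $R=1$ to control $\|(z-y^\star) - \calP_{{\rm T}_{y^\star}\calM}(z-y^\star)\|_F = \| \calP_{N_{y^\star}\calM}(z - y^\star)\|_F \le \frac{1}{2}\|z-y^\star\|_F^2$, I obtain a quadratic lower bound on $f(z) + \frac{1}{2\lambda}\|z-x\|_F^2 - f(y^\star) - \frac{1}{2\lambda}\|y^\star - x\|_F^2$ in terms of $\|z-y^\star\|_F^2$, with a strictly positive coefficient $\frac{1}{2}\left(\frac{1}{\lambda} - \rho - 3L\right) > 0$ precisely when $\lambda < (\rho+3L)^{-1}$. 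Applying this with $z$ any other element of $P_{\lambda f}(x)$ forces $z = y^\star$, giving single-valuedness.

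For Lipschitz continuity, take $x_1, x_2 \in \calM$ with proximal points $y_1 = P_{\lambda f}(x_1)$, $y_2 = P_{\lambda f}(x_2)$. I would write down the sharpened inequality from the previous step once with $(x,y^\star,z) = (x_1, y_1, y_2)$ and once with $(x_2, y_2, y_1)$, then add the two. The $f(y_1), f(y_2)$ terms cancel; the cross terms $\frac{1}{2\lambda}\|y_2 - x_1\|_F^2 - \frac{1}{2\lambda}\|y_1 - x_1\|_F^2 + \frac{1}{2\lambda}\|y_1 - x_2\|_F^2 - \frac{1}{2\lambda}\|y_2-x_2\|_F^2$ expand, after completing the square, into $\frac{1}{\lambda}\langle y_1 - y_2, x_1 - x_2\rangle - \frac{1}{\lambda}\|y_1-y_2\|_F^2$; and the remainder collects to $-(\rho+3L)\|y_1-y_2\|_F^2$. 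Rearranging yields
\[
\left(\tfrac{1}{\lambda} - \rho - 3L\right)\|y_1 - y_2\|_F^2 \;\le\; \tfrac{1}{\lambda}\langle y_1 - y_2,\, x_1 - x_2\rangle \;\le\; \tfrac{1}{\lambda}\|y_1-y_2\|_F\,\|x_1-x_2\|_F,
\]
and dividing through gives $\|y_1 - y_2\|_F \le (1 - \lambda(\rho+3L))^{-1}\|x_1 - x_2\|_F$, which is the claimed modulus.

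The main obstacle is the bookkeeping around the projection onto the tangent space: the Riemannian subgradient inequality lives on the manifold and involves tangent directions, whereas the optimality condition naturally produces $\calP_{{\rm T}_{y^\star}\calM}(x-y^\star)$, and to close the loop one must pay the second-order "curvature" price for replacing $z - y^\star$ by its tangential component. This is exactly where proximal smoothness of $\calM$ (with $R=1$) and the bound $\|\tilde\nabla_\calR f(y^\star)\|_F \le L$ enter, and getting the constant to come out as $\rho + 3L$ rather than something worse requires carefully accounting for the factor $\|v\|/(2R) = \|\tilde\nabla_\calR f\|/2 \le L/2$ in \eqref{eq: uniform-norm-ineq} on both sides. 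I would also need to check at the outset that $P_{\lambda f}(x)$ is nonempty — immediate since $\calM$ is compact and the objective in \eqref{eq: moresuenvelop} is continuous — and that all points involved stay inside $\calU$ where the Lipschitz bound on $f$ holds, which is automatic because every $y_i \in \calM \subset \calU$.
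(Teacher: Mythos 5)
Your argument is correct in substance but takes a genuinely different route from the paper's proof. The paper works entirely with the \emph{Euclidean} optimality conditions $x-u=\lambda\tilde{\nabla}f(u)+u_N$, $u_N\in N_\calM(u)$: single-valuedness is simply imported from \citet[Lemma 3.3]{davis2020stochastic}, and the Lipschitz modulus is obtained by pairing the two optimality systems, using monotonicity of $\partial f+\rho I$ (weak convexity) and the uniform normal inequality \eqref{eq: uniform-norm-ineq} applied to the normal vectors $u_N,v_N$, whose norms are bounded by $3\lambda L$. You instead stay intrinsic: the Riemannian stationarity condition $\calP_{{\rm T}_{y^\star}\calM}(x-y^\star)=\lambda\tilde{\nabla}_\calR f(y^\star)$, the Riemannian subgradient inequality (Lemma \ref{lem: riessubgineq}, which indeed holds for $f$ itself by the same argument), and \eqref{eq: uniform-norm-ineq} give quadratic growth of the prox objective around $y^\star$ with coefficient $\tfrac12\bigl(\lambda^{-1}-\rho-3L\bigr)$; this yields single-valuedness \emph{without} the external lemma (a nice self-contained bonus), and summing the growth inequality at $(x_1,y_1,y_2)$ and $(x_2,y_2,y_1)$ collapses, exactly as you state, to $(\lambda^{-1}-\rho-3L)\|y_1-y_2\|_F^2\le\lambda^{-1}\langle x_1-x_2,\,y_1-y_2\rangle$, which is the same strong-monotonicity-type estimate the paper reaches and gives the same modulus $(1-\lambda(\rho+3L))^{-1}$.

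One sub-step must be made explicit, and one remark corrected. To turn the normal-component error $\bigl|\langle x-y^\star,\calP_{N_{y^\star}\calM}(z-y^\star)\rangle\bigr|\le\|x-y^\star\|_F\cdot\tfrac12\|z-y^\star\|_F^2$ into the contribution $L\|z-y^\star\|_F^2$ (so that the constant is $\rho+3L$ and not something depending on an uncontrolled $\|x-y^\star\|_F$), you need the estimate $\|x-P_{\lambda f}(x)\|_F\le 2\lambda L$, which follows in one line from optimality and $L$-Lipschitzness, namely $\tfrac1{2\lambda}\|y^\star-x\|_F^2\le f(x)-f(y^\star)\le L\|x-y^\star\|_F$; this is the analogue of the paper's bound $\|u_N\|_F\le 3\lambda L$ via \citet[Lemma 3.2]{davis2020stochastic}. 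Your closing comment that the relevant factor in \eqref{eq: uniform-norm-ineq} is $\|\tilde{\nabla}_\calR f\|/2\le L/2$ is not the right bookkeeping: the vector fed into the normal inequality is (the normal part of) $x-y^\star$, not the Riemannian subgradient, so the factor is $\|x-y^\star\|_F/2\le\lambda L$, and only with this bound do the constants you state actually come out as $\rho+3L$.
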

\begin{proof}
			It follows from \citet[Lemma 3.3]{davis2020stochastic} that $P_{\lambda f}(x)$ is a singleton for all $x \in \calM$. For any two points $x, y \in \calM$, let $u= P_{\lambda f}(x)$ and $v = P_{\lambda f}(y)$. By the optimality of the proximal mapping, we have
	\begin{align} \label{eq: proxlipopt}
		\left\{ \begin{aligned}
			x - u & = \lambda \tilde{\nabla} f(u) + u_N, \\
			y - v & = \lambda \tilde{\nabla} f(v) + v_N, \\
		\end{aligned} \right.         
	\end{align}
	where $\tilde{\nabla} f(u) \in \partial f(u)$, $\tilde{\nabla} f(v) \in  \partial f(v)$, $u_N \in N_\calM(u)$, and $v_N \in N_\calM(v)$. Here, we denote $N_\calM(u), N_\calM(v)$ as normal spaces of $\calM$ at points $u$ and $v$, respectively. It follows
	\begin{align}
		\quad \iprod{x-y}{u-v} - \|u-v\|_F^2
		& = \lambda \iprod{\tilde{\nabla} f(u) - \tilde{\nabla} f(v)}{u-v} + \iprod{u_N - v_N}{u -v} \nonumber \\ 
		& \geq -\lambda \rho\|u -v\|_F^2 - \frac{\|u_N\|_F + \|v_N\|_F}{2} \|u-v\|_F^2, \label{eq: proofLip}
	\end{align}
	where the inequality is due to the weak convexity of $f$ and the uniform normal inequality \eqref{eq: uniform-norm-ineq}. According to \eqref{eq: proxlipopt}, we have
	\begin{align*}
		\|u_N\|_F \leq \|x - u\|_F + \lambda L \leq 3\lambda L,
	\end{align*}
	where the first inequality comes from the fact $\|\tilde{\nabla} f(u) \|_F \leq L$ \citep[Theorem 9.13]{rockafellar2009variational} and the second inequality is from \citet[Lemma 3.2]{davis2020stochastic}. Similarly, we have $\|v_N\|_F \leq 3 \lambda L$.
	Therefore, if $\lambda \in (0, (\rho + 3L)^{-1})$, by \eqref{eq: proofLip} we have
	\begin{equation*}
		\| x - y \|_F \| u - v\|_F - \| u - v\|_F^2
		\geq (-\lambda \rho - 3 \lambda L) \| u-v\|_F^2,
	\end{equation*}
	which yields
	\begin{equation*}
		\|u -v\|_F \leq \frac{1}{1-\lambda(\rho + 3L)} \|x -y\|_F.
	\end{equation*}
	This shows the Lipschitz continuity of the proximal operator $P_{\lambda f}$.
\end{proof}

Let us illustrate the results of Lemma \ref{lem:lip} through the following two examples.
\begin{exam}
Consider a function $f: \mathbb{R}^{d \times r} \rightarrow \mathbb{R}$ of the form
\begin{equation*}
	f(x) = \frac{1}{2}\|x - a\|_F^2,
\end{equation*}
where $a \in \R^{d\times r}$ is given. Note that $f$ is (weakly) convex and Lipschitz continuous over $\calM$. We have
\[ \begin{aligned}
	P_{\lambda f}(x) = \mathop{\arg\min}_{y \in \calM} \;\; \left\{ \frac{1}{2}\|y-a\|_F^2 + \frac{1}{2\lambda} \| y -x \|_F^2 \right\}  = \calP_{\calM}(x + \lambda a) 
\end{aligned}
\]
with $x \in \calM$.
According to \eqref{eq: proximallysmoothsetLips}, for any $\lambda$ satisfying $0< \lambda < \|a\|_F^{-1}$, $P_{\lambda f}(x)$ is Lipschitz continuous with modulus $(1 - \lambda \|a\|_F)^{-1}$.
\end{exam}

\begin{exam}\citep[Proposition 4.6]{luss2013conditional}
Consider a function $f: \mathbb{R}^d \rightarrow \mathbb{R}$ of the form
\begin{equation*}
	f(x) = \| x\|_1.
\end{equation*}
Note that $f$ is (weakly) convex and Lipschitz continuous over $\calM$. For any fixed $x \in \mathbb{R}^{d}$ satisfying $\|x\|_2=1$ and any $\lambda$ satisfying $0 < \lambda < \frac{1}{3\sqrt{d}}$, we have
\begin{align*}
	P_{\lambda f}(x) & = \mathop{\arg\min}_{\| y\|_2=1} \;\; \left\{\| y\|_1 + \frac{1}{2\lambda} \| y -x \|_F^2 \right\} \\
	& = \mathop{\arg\max}_{\| y\|_2=1} \;\; \left\{ y^\top x - \| \lambda y\|_1 \right\} \\
	& = \frac{(|x|-\lambda 1_d)_+ \circ \sgn(x)}{\| (|x|-\lambda 1_d)_+ \circ \sgn(x) \|_2},
\end{align*}
where $|x|$ denotes the vector with the $i$-th entry being $\left|x_i\right|$; $\sgn(x)$ denotes the vector with $i$-th entry being $-1,0,1$ if $x_i<0, x_i=0, x_i>0$, respectively; $x_{+}$ denotes the vector with the $i$-th entry being $\max \left\{x_i, 0\right\}$; and ``$\circ$" represents the element-wise product. According to Lemma \ref{lem:lip}, $P_{\lambda f}(x)$ is Lipschitz continuous with modulus $\left(1 - 3\lambda \sqrt{d}\right)^{-1}$. 
\end{exam}

With Fact \ref{thm:linear_rate_consensus} and Lemma \ref{lem:lip} at our disposal, we are ready to establish the global convergence of DRSM. The proof consists of two parts: The first part guarantees the consensus; the second establishes the convergence and iteration complexity.

It is rather typical in decentralized optimization to adopt diminishing stepsizes for achieving exact consensus if subgradient-type methods are applied. Hence, we need the following assumption on the stepsize.
\begin{assum}[Diminishing stepsizes]
\label{assum: diminishingstepsize}
The stepsizes $\beta_k >0, k\ge1$ are non-increasing and satisfy
\begin{align*}
	\sum_{k=0}^{\infty}\beta_k = \infty, \lim\limits_{k\rightarrow \infty} \beta_k = 0, \lim\limits_{k\rightarrow \infty} \frac{\beta_{k+1}}{\beta_k} =1.
\end{align*}
\end{assum}

To control the consensus error, one useful property is the boundedness of the norm of (Riemannian) subgradient of $f$ over the Stiefel manifold, i.e., $\|\tilde{\nabla}_{\calR}f_i(x) \|_F \leq L$ in Section \ref{subsec: subdifferential}. With such a property at hand, we can establish the following consensus result.
\begin{lem}[Consensus error] \label{thm: consensusbound}
Suppose that Assumption \ref{assum: Connected graph} holds.
Let $\alpha \in (0, \bar{\alpha}]$ for some $\bar{\alpha}\in (0,1]$, $0 < \beta_k \leq \min\{\frac{1-\rho_t}{L} \delta_1, \frac{\alpha \delta_1}{5L} \}$, and $t \geq \lceil \log_{\sigma_2}(\frac{1}{2\sqrt{n}}) \rceil$. If $\vx_0 \in \calN$, then the iterates generated by \eqref{eq: algiteratedrsm} satisfy $\vx_k \in \calN$ for all $k \geq 0$. Moreover, we have 
\begin{align}\label{eq: consensusboundgeneralbeta}
	\quad \| \vx_{k+1} - \bar{\vx}_{k+1} \|_F \leq  \rho_t^{k+1} \|\vx_0 - \bar{\vx}_0 \|_F + \sqrt{n}L \sum_{l=0}^k \rho_t^{k-l}\beta_l,
\end{align}
where $\rho_t \in (0,1)$ is defined in \eqref{linear rate of consensus}.
\end{lem}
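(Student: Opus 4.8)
The plan is to prove the two assertions together by induction on $k$, the engine being the one-step ``perturbed contraction''
\[
\|\vx_{k+1} - \bar{\vx}_{k+1}\|_F \;\le\; \rho_t\,\|\vx_k - \bar{\vx}_k\|_F \;+\; \sqrt{n}\,L\,\beta_k ,
\]
which I claim holds whenever $\vx_k\in\calN$. Granting this, \eqref{eq: consensusboundgeneralbeta} follows by simply unrolling the recursion, namely $\|\vx_{k+1}-\bar{\vx}_{k+1}\|_F\le\rho_t^{k+1}\|\vx_0-\bar{\vx}_0\|_F+\sqrt nL\sum_{l=0}^k\rho_t^{k-l}\beta_l$, and the feasibility claim $\vx_k\in\calN$ will be re-established at each step as part of the same induction; the base case is the hypothesis $\vx_0\in\calN$.

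\emph{The one-step estimate.} Fix $k$ with $\vx_k\in\calN$ and introduce the auxiliary iterate $\vy_{k+1}$ produced by one step of the pure consensus method \eqref{eq: drcs} at $\vx_k$, that is, $y_{i,k+1}=\calR_{x_{i,k}}(\alpha\calP_{{\rm T}_{x_{i,k}}\calM}(\sum_j W_{ij}^t x_{j,k}))$. Three ingredients combine. (i) The points $x_{i,k+1}$ and $y_{i,k+1}$ are retractions at the \emph{same} base point $x_{i,k}$ of tangent vectors differing by $\beta_k\tilde{\nabla}_\calR f_i(x_{i,k})$, so by the non-expansiveness of the polar retraction together with $\|\tilde{\nabla}_\calR f_i(x_{i,k})\|_F\le L$ we get $\|x_{i,k+1}-y_{i,k+1}\|_F\le\beta_kL$, hence $\|\vx_{k+1}-\vy_{k+1}\|_F\le\sqrt nL\beta_k$. (ii) Since $\vx_k\in\calN$, the (one-step) linear-convergence estimate underlying Fact \ref{thm:linear_rate_consensus} gives $\|\vy_{k+1}-\bar{\vy}_{k+1}\|_F\le\rho_t\|\vx_k-\bar{\vx}_k\|_F$, where $\bar{\vy}_{k+1}=1_n\otimes\calP_\calM(\tfrac1n\sum_i y_{i,k+1})$. (iii) Since the induced arithmetic mean realizes the minimal squared distance to the consensus set, $\|\vx_{k+1}-\bar{\vx}_{k+1}\|_F^2=\min_{z\in\calM}\sum_i\|x_{i,k+1}-z\|_F^2\le\|\vx_{k+1}-\bar{\vy}_{k+1}\|_F^2$. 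Chaining (iii), the triangle inequality, (i) and (ii) yields the one-step estimate.

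\emph{Staying in $\calN=\calN_1\cap\calN_2$.} The $\calN_1$ part is immediate from the recursion: if $\|\vx_k-\bar{\vx}_k\|_F\le\sqrt n\delta_1$, then $\|\vx_{k+1}-\bar{\vx}_{k+1}\|_F\le\rho_t\sqrt n\delta_1+\sqrt nL\beta_k\le\rho_t\sqrt n\delta_1+\sqrt nL\cdot\tfrac{1-\rho_t}{L}\delta_1=\sqrt n\delta_1$, so $\vx_{k+1}\in\calN_1$ --- this is exactly why $\beta_k\le\tfrac{1-\rho_t}{L}\delta_1$ is imposed. The $\calN_2$ part is the delicate step, and is where I expect the main obstacle: the aggregate Frobenius contraction above says nothing about the per-agent quantity $\max_i\|x_{i,k+1}-\bar x_{k+1}\|_F$, and a crude comparison with $\vy_{k+1}$ leaves a slack of order $\delta_2$ that does not close. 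Instead one must reopen the argument of \citet{chen2021local} that establishes the $\calN$-invariance of \eqref{eq: drcs} and push the extra term through it: \eqref{eq: algiteratedrsm} is the \eqref{eq: drcs} update at $x_{i,k}$ perturbed by a tangent vector of norm at most $\beta_kL$, so each branch of that invariance argument acquires an additive $\mathcal{O}(\beta_kL)$ correction, and the hypotheses $\beta_k\le\tfrac{\alpha\delta_1}{5L}$ (whose $\alpha$-scaling matches how $\alpha$ enters the DRCS estimates), $\delta_1\le\tfrac{1}{5\sqrt r}\delta_2$ and $\delta_2\le\tfrac16$ are precisely calibrated to absorb these corrections and conclude $\|x_{i,k+1}-\bar x_{k+1}\|_F\le\delta_2$. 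This reworking --- rather than the consensus recursion, which is routine once (i)--(iii) are in place --- is the technical heart; it will also draw on the non-expansiveness of the polar retraction, the projection-Lipschitz bound \eqref{eq: proximallysmoothsetLips}, and Lemma \ref{lem: liplikepro} for controlling $\calP_\calM$ at the moving reference point $\bar{\vx}_k$.
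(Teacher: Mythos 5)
Your overall strategy coincides with the paper's: establish the one-step perturbed contraction $\|\vx_{k+1}-\bar{\vx}_{k+1}\|_F\le\rho_t\|\vx_k-\bar{\vx}_k\|_F+\sqrt{n}L\beta_k$, run an induction to keep $\vx_k\in\calN$, and unroll to get \eqref{eq: consensusboundgeneralbeta}. Your derivation of the recursion via the auxiliary DRCS iterate $\vy_{k+1}$ (IMA optimality of $\bar{x}_{k+1}$, triangle inequality, and the one-step contraction behind Fact \ref{thm:linear_rate_consensus}) is sound and in fact more self-contained than the paper, which simply adapts \citet[Lemma 4.1]{chen2021distributed} and states \eqref{eq: consensuserroriter} directly. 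One caveat: your step (i) uses the two-point non-expansiveness $\|\calR_x(u)-\calR_x(v)\|_F\le\|u-v\|_F$ of the polar retraction at a common base point. This is true (for tangent $u$ one has $\sigma_{\min}(x+u)\ge 1$, and the polar-factor perturbation bound then gives Lipschitz constant at most $1$), but it is not among the retraction properties quoted in the paper (which only invokes $\|\calR_x(u)-y\|_F\le\|x+u-y\|_F$ for $y\in\calM$), so it needs to be stated and justified; that weaker property alone would leave an $\calO(\alpha^2)$ retraction-error residue in your comparison with $\vy_{k+1}$.

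The genuine shortfall is the $\calN_2$-invariance, which you correctly identify as the technical heart but leave as a plan (``reopen the invariance argument of \citet{chen2021local} and push an $\calO(\beta_k L)$ correction through each branch'') rather than a proof. That is exactly what the paper does, and it is where essentially all the work lies: it bounds $\|\vx_{k+1}-\vx_k\|_{F,\infty}\le 2\alpha\delta_2+\tfrac{\alpha}{2}$ using $\beta_k\le\tfrac{\alpha}{2L}$; bounds the drift of the mean, $\|\bar{x}_k-\bar{x}_{k+1}\|_F$, via \citet[Lemma C.4]{chen2021decentralized}, picking up the extra terms $\beta_kL+2M\beta_k^2L^2$ (this is where $\alpha\le 1/M$ and $\beta_k\le\tfrac{\alpha\delta_1}{5L}$ enter); bounds $\|x_{i,k}-\alpha\operatorname{grad}\varphi_i^t(\vx_k)-\bar{x}_k\|_F\le(1-\tfrac{\alpha}{2})\delta_2+2\alpha\delta_1^2\sqrt{r}+2\alpha\delta_2^2$ using the explicit formula \eqref{eq: gradconsensus}; and finally checks numerically that the resulting sum is at most $\delta_2$ under $\delta_1\le\tfrac{1}{5\sqrt{r}}\delta_2$ and $\delta_2\le\tfrac16$. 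Your proposal asserts, but does not establish, that the stated constants ``absorb'' the perturbation; to constitute a proof you would have to carry out these (or equivalent) estimates explicitly. Everything else in your argument is correct and aligned with the paper.
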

\begin{proof}
			The proof is similar to that in \citet[Lemma 4.1]{chen2021distributed}. First, we prove $\vx_k \in \calN$ for all $k\geq 0$ by induction. Suppose $\vx_k \in \calN$, we would like to show $\vx_{k+1} \in \calN$. Since $\beta_k \leq \frac{1-\rho_t}{L}\delta_1$, it follows
	\begin{align}
		\|\vx_{k+1} - \bar{\vx}_{k+1} \|_F  &\leq \rho_t \|\vx_{k} - \bar{\vx}_{k} \|_F + \beta_k \sqrt{n}L \label{eq: consensuserroriter} \\
		& \leq \rho_t \sqrt{n}\delta_1 + \beta_k \sqrt{n}L  \leq \sqrt{n}\delta_1, \nonumber
	\end{align}
	which means that $\vx_{k+1} \in \calN_1$. Next, we shall verify $\vx_{k+1} \in \calN_2$. Utilizing the facts of $\beta_k \leq \frac{\alpha \delta_1}{5L} \leq \frac{\alpha}{2L}$ and $\alpha \leq 1$, we get
	\begin{equation*}
		\left\|{\vx}_{k+1}-{\vx}_{k}\right\|_{\mathrm{F}, \infty} \leq  \max _{i \in[n]}\left\|\alpha \operatorname{grad} \varphi^{t}_i\left(\vx_k \right)\right\|_{\mathrm{F}}+\beta_{k} L \leq 2 \alpha \delta_{2}+\frac{\alpha}{2} \leq 1-\delta_{1}^{2},
	\end{equation*}
	where the second inequality comes from \citet[(C.10)]{chen2021decentralized}.
	
	By \citet[Lemma C.4]{chen2021decentralized}, we obtain
	\begin{align*}
		\left\|\bar{x}_{k}-\bar{x}_{k+1}\right\|_{\mathrm{F}} & \leq \frac{1}{1-2 \delta_{1}^{2}}\left(\frac{2 L_{t}^{2} \alpha+L_{t} \alpha}{n}\left\|{\vx}_{k}-\overline{\vx}_{k}\right\|_{\mathrm{F}}^{2}+ \beta_k L+2 M \beta_{k}^{2}L^2\right) \\
		& \leq \frac{1}{1-2 \delta_{1}^{2}}\left[\left(2 L_{t}^{2} \alpha+L_{t} \alpha\right) \delta_{1}^{2}+\beta_k L+2 M \beta_{k}^{2}L^2\right].
	\end{align*}
	Furthermore, since $L_{t} \leq 2, \beta_{k} \leq \frac{\alpha \delta_{1}}{5L}, \alpha \leq 1 / M$, we get
	\begin{align*}
		\left\|\bar{x}_{k}-\bar{x}_{k+1}\right\|_{\mathrm{F}} \leq \frac{1}{1-2 \delta_{1}^{2}}\left(\frac{252}{25} \alpha \delta_{1}^{2}+\frac{\alpha \delta_{1}}{5}\right) \leq \frac{1}{1-2 \delta_{1}^{2}}\left(\frac{252}{625 r} \alpha \delta_{2}^{2}+\frac{\alpha \delta_{2}}{25 \sqrt{r}} \right),
	\end{align*}
	where the last inequality follows from $\delta_{1} \leq \frac{1}{5 \sqrt{r}} \delta_{2}$. Then, one has
	\begin{align}
		\left\|x_{i, k+1}-\bar{x}_{k+1}\right\|_{\mathrm{F}} 
		& \leq \left\|x_{i, k+1}-\bar{x}_{k}\right\|_{\mathrm{F}}+\left\|\bar{x}_{k}-\bar{x}_{k+1}\right\|_{\mathrm{F}} \nonumber \\
		& \leq \| x_{i, k}- \alpha \operatorname{grad} \varphi_{i}^{t}\left(\vx_{k}\right)-\beta_{k} \tilde{\nabla}_{\calR}f_i(x_{i,k}) -\bar{x}_{k}\left\|_{\mathrm{F}}+\right\| \bar{x}_{k}-\bar{x}_{k+1} \|_{\mathrm{F}}  \nonumber \\
		& \leq \left\|x_{i, k}-\alpha \operatorname{grad} \varphi_{i}^{t}\left(\vx_{k}\right)-\bar{x}_{k}\right\|_{\mathrm{F}}+\frac{\alpha \delta_{1}}{5} +\left\|\bar{x}_{k}-\bar{x}_{k+1}\right\|_{\mathrm{F}} \label{eq: consensusdecom}
	\end{align}
	Now, by the fact
	\begin{equation} \label{eq: gradconsensus}
		\operatorname{grad} \varphi_{i}^{t}(\vx)=x_{i}-\sum_{j=1}^{n} W_{i j}^t x_{j}-\frac{1}{2} x_{i} \sum_{j=1}^{n} W_{i j}^{t}\left(x_{i}-x_{j}\right)^{\top}\left(x_{i}-x_{j}\right),
	\end{equation}
	we have 
	\begin{align*}
		&\left\|x_{i, k}-\alpha \operatorname{grad} \varphi_{i}^{t}\left(\vx_{k}\right)-\bar{x}_{k}\right\|_{\mathrm{F}} \\
		\leq & (1-\alpha) \delta_{2}+\alpha\left\|\hat{x}_{k}-\bar{x}_{k}\right\|_{\mathrm{F}}+\alpha\left\|\sum_{j=1}^{n}\left(W_{i j}^{t}-\frac{1}{n}\right) x_{j, k}\right\|_{\mathrm{F}} +\frac{1}{2}\left\|\alpha \sum_{j=1}^{n} W_{i j}^{t}\left(x_{i, k}-x_{j, k}\right)^{\top}\left(x_{i, k}-x_{j, k}\right)\right\|_{\mathrm{F}} \\
		\leq & (1-\alpha) \delta_{2}+2 \alpha \delta_{1}^{2} \sqrt{r}+\alpha\left\|\sum_{j=1}^{n}\left(W_{i j}^{t}-\frac{1}{n}\right) x_{j, k}\right\|_{\mathrm{F}}+2 \alpha \delta_{2}^{2} \\
		\leq &\left(1-\frac{\alpha}{2}\right) \delta_{2}+2 \alpha \delta_{1}^{2} \sqrt{r}+2 \alpha \delta_{2}^{2},
	\end{align*}
	where the first inequality follows from $\alpha \in[0,1]$, the second inequality holds by \citet[Lemma C.1]{chen2021decentralized}, and the last inequality follows from \citet[Lemma D.1]{chen2021decentralized}. Substitute this bound into \eqref{eq: consensusdecom} yields
	\begin{align}
		\left\|x_{i, k+1}-\bar{x}_{k+1}\right\|_{\mathrm{F}} 
		\leq \left(1-\frac{\alpha}{2}\right) \delta_{2}+2 \alpha \delta_{1}^{2} \sqrt{r}+2 \alpha \delta_{2}^{2}+\frac{1}{5} \alpha \delta_{1}+\frac{1}{1-2 \delta_{1}^{2}}\left(\frac{252}{625 r} \alpha \delta_{2}^{2}+\frac{\alpha \delta_{2}}{25 \sqrt{r}} \right) . \label{eq: consensuserrorlast}
	\end{align}
	Therefore, substituting the conditions on $\delta_{1}, \delta_{2}$ in Section \ref{subsec: consensusstiefel} into \eqref{eq: consensuserrorlast} yields
	\begin{align*}
		\left\|x_{i, k+1}-\bar{x}_{k+1}\right\|_{\mathrm{F}} \leq \delta_{2} .
	\end{align*}
	We already obtain $\vx_{k+1} \in \calN_2$.
	Last, unfolding \eqref{eq: consensuserroriter} yields
	\begin{align*}
		\|\vx_{k+1} - \bar{\vx}_{k+1} \|_F  \leq & \rho_t \|\vx_{k} - \bar{\vx}_{k} \|_F + \beta_k \sqrt{n}L
		\leq \rho_t^{k+1} \|\vx_0 - \bar{\vx}_0 \|_F + \sqrt{n}L \sum_{l=0}^k \rho_t^{k-l}\beta_l.
	\end{align*}
	The proof is complete.
\end{proof}

An immediate result by \citet[Lemma  \uppercase\expandafter{\romannumeral2}.4]{chen2021distributed} is that the rate of consensus satisfies $\| \vx_k - \bar{\vx}_k\|_F^2  = \calO(\beta_k^2)$ if $\beta_k = \calO(\frac{1}{k^p})$ with $p \in (0,1]$. Moreover, we have $\| \vx_{k+1} - \bar{\vx}_{k+1} \|_F = \calO(\frac{\sqrt{n}L\beta_k}{1-\rho_t})$.


Next, we establish the convergence of the Moreau envelope sequence $\{f_\lambda(\bar{x}_k)\}_{k=1,2,\dots}$. After that, we prove that the infimum of the defined measure of stationarity vanishes.
\begin{thm}[Iteration complexity] \label{thm: iteration complexity}
Let $0 < \lambda < \min \{\frac{1}{\rho+3L}, \frac{1}{2(\rho + L)} \}$ and $\{x_{i,k}\}$ be the iterative sequence generated by \eqref{eq: algiteratedrsm}. Under Assumption \ref{assum: Connected graph} and Assumption \ref{assum: diminishingstepsize}, the following two results hold:
\begin{enumerate}[(i)]
	\item If $\sum_{k=0}^\infty \beta_k^2 < \infty$, then there exists a constant $\bar{f}_\lambda$ such that \begin{align*}
		\lim\limits_{k \rightarrow \infty} f_\lambda(x_{i,k}) = \lim\limits_{k \rightarrow \infty} f_\lambda(\bar{x}_k) = \bar{f}_\lambda;
	\end{align*}
	\item There exists a sequence $\{a_k\}$ such that $a_k = \calO\left(\frac{L^2 \beta_k^2}{(1-\rho_t)^2}\right)$ for $k\ge0$ and 
	\begin{align*}
		& \inf_{k\ge 0} \| \bar{x}_k - P_{\lambda f}(\bar{x}_k)\|_F^2 \\
		\leq & \frac{ \lambda \left(\bar{f}_\lambda(\vx_0) - \min\limits_{y \in \mathcal{M}} f_\lambda(y) + \sum_{k=0}^\infty a_k + \sum_{k=0}^\infty \frac{\beta_k^2 L^2}{\lambda} \right)}{(\frac{1}{2\lambda} - \rho - L) \sum_{k=0}^\infty  \beta_k},
	\end{align*}
	where $\bar{f}_\lambda(\vx_{0}) \coloneqq \frac{1}{n} \sum_{i=1}^n f_\lambda(x_{i,0})$.
\end{enumerate}
\end{thm}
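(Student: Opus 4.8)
The plan is to use the manifold Moreau envelope $f_\lambda$, averaged over the agents, as a Lyapunov function and to establish a single approximate descent inequality
\[
\frac{1}{n}\sum_{i=1}^n f_\lambda(x_{i,k+1}) \;\le\; \frac{1}{n}\sum_{i=1}^n f_\lambda(x_{i,k}) \;-\; \frac{1}{\lambda}\Bigl(\frac{1}{2\lambda}-\rho-L\Bigr)\beta_k\,\bigl\|\bar{x}_k - P_{\lambda f}(\bar{x}_k)\bigr\|_F^2 \;+\; \frac{\beta_k^2 L^2}{\lambda} \;+\; a_k ,
\]
with $a_k=\calO\!\bigl(L^2\beta_k^2/(1-\rho_t)^2\bigr)$; both claims are then routine consequences. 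Here $u_k\coloneqq P_{\lambda f}(\bar{x}_k)$ is well defined and single-valued by Lemma~\ref{lem:lip} (this is what forces $\lambda<(\rho+3L)^{-1}$), and the same lemma gives that $f_\lambda$ is Lipschitz, indeed $C^{1,1}$, on the compact set $\calM$.

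To obtain the displayed inequality, first analyze one DRSM step at the level of the Euclidean mean $\hat{x}_k=\tfrac1n\sum_i x_{i,k}$: using the retraction estimate $\|\calR_x(v)-(x+v)\|_F\le M\|v\|_F^2$, the expression \eqref{eq: gradconsensus} for $\grad\varphi_i^t$ together with the double stochasticity of $W^t$, and Lemma~\ref{thm: consensusbound} (which gives $\|\vx_k-\bar{\vx}_k\|_F=\calO(\beta_k/(1-\rho_t))$, hence $\|\grad\varphi_i^t(\vx_k)\|_F=\calO(\beta_k)$), one gets $\hat{x}_{k+1}=\hat{x}_k-\beta_k d_k+r_k$ with $d_k\coloneqq\tfrac1n\sum_i\tilde{\nabla}_\calR f_i(x_{i,k})$ and $\|r_k\|_F=\calO(\beta_k^2)$. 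Next, bounding $f_\lambda(\bar{x}_{k+1})\le f(u_k)+\tfrac{1}{2\lambda}\|u_k-\bar{x}_{k+1}\|_F^2$ by the suboptimality of $u_k$ and expanding the square around $\|u_k-\bar{x}_k+\beta_k d_k\|_F^2$, the decisive cross term $\langle d_k, u_k-\bar{x}_k\rangle$ is split into $\tfrac1n\sum_i\langle\tilde{\nabla}_\calR f_i(x_{i,k}),u_k-x_{i,k}\rangle$ and $\tfrac1n\sum_i\langle\tilde{\nabla}_\calR f_i(x_{i,k}),x_{i,k}-\bar{x}_k\rangle$. The first is handled by the Riemannian subgradient inequality \eqref{eq: riessubgineq} at each $x_{i,k}$ — this is exactly where the curvature constant $L$ enters and the effective weak-convexity modulus becomes $\rho+L$ — combined with $f(u_k)-f(\bar{x}_k)\le-\tfrac{1}{2\lambda}\|u_k-\bar{x}_k\|_F^2$ (from $f_\lambda(\bar{x}_k)\le f(\bar{x}_k)$); the second term, the $\calO(\beta_k^2)$ remainder from $r_k$, the bound $\|d_k\|_F\le L$, and the $L$-Lipschitz replacement of $f(x_{i,k})$ by $f(\bar{x}_k)$ are all absorbed into $a_k$ via the consensus bound. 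The hypothesis $\lambda<\tfrac{1}{2(\rho+L)}$ is precisely what makes the resulting coefficient $\tfrac1\lambda(\tfrac1{2\lambda}-\rho-L)$ strictly positive.

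Two manifold-specific points need care. First, to move between $\hat{x}_k$ and the induced arithmetic mean $\bar{x}_k=\calP_\calM(\hat{x}_k)$: since $\hat{x}_k$ is an average of points of $\calM$ lying within $\calO(\|\vx_k-\bar{\vx}_k\|_{F,\infty})$ of one another, the proximal smoothness of $\calM$ (radius $R=1$) and the uniform normal inequality \eqref{eq: uniform-norm-ineq} force $\|\hat{x}_k-\bar{x}_k\|_F=\dist(\hat{x}_k,\calM)=\calO(\|\vx_k-\bar{\vx}_k\|_F^2)=\calO(\beta_k^2)$ — the mean drifts off $\calM$ only at second order — so $\hat{x}_{k+1}$ and $\hat{x}_k$ may be replaced by $\bar{x}_{k+1}$ and $\bar{x}_k$ inside the quadratic terms at cost $\calO(\beta_k^2)$. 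Second, to land the initial constant as $\bar f_\lambda(\vx_0)=\tfrac1n\sum_i f_\lambda(x_{i,0})$ rather than $f_\lambda(\bar{x}_0)$, carry the averaged quantity $F_k\coloneqq\tfrac1n\sum_i f_\lambda(x_{i,k})$ throughout: using $\|\hat{x}_k-\bar{x}_k\|_F=\calO(\beta_k^2)$, the $C^{1,1}$ regularity of $f_\lambda$, and $\tfrac1n\sum_i(x_{i,k}-\hat{x}_k)=0$ (which kills the first-order deviation of $F_k$ from $f_\lambda(\hat{x}_k)$), one has $|F_k-f_\lambda(\bar{x}_k)|=\calO(\beta_k^2)$, so the descent inequality can be written entirely in terms of $F_k$ at two consecutive steps.

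With the displayed inequality in hand, part~(i) follows by dropping the nonpositive term: when $\sum_k\beta_k^2<\infty$ the perturbations $\tfrac{\beta_k^2 L^2}{\lambda}+a_k$ are summable, $f_\lambda$ is bounded below on the compact set $\calM$, so $\{F_k-\sum_{j<k}(\cdots)\}$ is nonincreasing and bounded below, hence $F_k$ converges to some $\bar f_\lambda$; then $|f_\lambda(x_{i,k})-f_\lambda(\bar{x}_k)|\le L\|P_{\lambda f}(x_{i,k})-P_{\lambda f}(\bar{x}_k)\|_F+\calO(\|x_{i,k}-\bar{x}_k\|_F^2)\to0$ by Lemma~\ref{lem:lip} and Lemma~\ref{thm: consensusbound}, giving $f_\lambda(x_{i,k})\to\bar f_\lambda$ as well. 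For part~(ii), summing the inequality over $k=0,\dots,K$, telescoping $F$, using $F_{K+1}\ge\min_{y\in\calM}f_\lambda(y)-\calO(\beta_K^2)$ and letting $K\to\infty$ gives $\tfrac1\lambda(\tfrac1{2\lambda}-\rho-L)\sum_{k\ge0}\beta_k\|\bar{x}_k-P_{\lambda f}(\bar{x}_k)\|_F^2\le \bar f_\lambda(\vx_0)-\min_{y\in\calM}f_\lambda(y)+\sum_{k\ge0}a_k+\sum_{k\ge0}\tfrac{\beta_k^2 L^2}{\lambda}$, and bounding $\inf_{k\ge0}\|\bar{x}_k-P_{\lambda f}(\bar{x}_k)\|_F^2\cdot\sum_{k\ge0}\beta_k$ by the left-hand side yields the claimed estimate. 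The main obstacle is the single-step analysis: making every error term genuinely second order in $\beta_k$ — in particular establishing that both the consensus-gradient contribution to the mean update and the off-manifold drift of $\hat{x}_k$ are $\calO(\beta_k^2)$ — while threading the Riemannian subgradient inequality correctly through the non-consensual evaluation points and retaining the curvature-inflated modulus $\rho+L$.
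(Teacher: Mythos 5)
Your proposal reaches the same descent inequality and the same two conclusions, but by a genuinely different one-step analysis. The paper works per agent: it compares $f_\lambda(x_{i,k+1})$ against the local proximal point $P_{\lambda f}(x_{i,k})$, uses the polar-retraction nonexpansiveness and the Riemannian subgradient inequality at $x_{i,k}$, and then transfers everything to the common reference point $\bar x_k$ via the Lipschitz continuity of $P_{\lambda f}$ (Lemma~\ref{lem:lip}) together with Lemma~\ref{thm: consensusbound}; the quantity $\bar f_\lambda(\vx_k)$ is the Lyapunov function throughout, so the numerator $\bar f_\lambda(\vx_0)$ and the modulus $\rho+L$ appear exactly as stated. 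You instead track the Euclidean mean $\hat x_k$, exploit double stochasticity to cancel the consensus gradient to first order (with the quadratic remainder of $\grad\varphi_i^t$ and the retraction error giving $\calO(\beta_k^2)$, exactly the facts the paper borrows from \citet{chen2021decentralized,chen2021local} in other proofs), and compare against the single proximal point $P_{\lambda f}(\bar x_k)$; this actually avoids the Lipschitzness of $P_{\lambda f}$ in the main descent step and even yields the slightly better coefficient $\frac1\lambda(\frac1{2\lambda}-\frac{\rho+L}{2})$, which implies the stated bound. The one place where you assert more than the paper provides is the claim that $f_\lambda$ is $C^{1,1}$ near $\calM$: Lemma~\ref{lem:lip} as stated only gives single-valuedness and Lipschitzness of $P_{\lambda f}$ on $\calM$, not differentiability of the envelope on an ambient neighborhood, and you need that (via the zero-mean first-order term) to get $|\frac1n\sum_i f_\lambda(x_{i,k})-f_\lambda(\bar x_k)|=\calO(\beta_k^2)$ rather than the $\calO(\beta_k)$ that plain Lipschitzness of $f_\lambda$ gives; with only $\calO(\beta_k)$ per step the conversion errors are not summable and your telescoping in terms of $F_k$ would not land on $\bar f_\lambda(\vx_0)$ with summable remainders. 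The fact you need is true for $\lambda$ in the stated range (it follows from weak convexity plus proximal smoothness, in the spirit of \citet{davis2020stochastic}), but it should be stated and proved as a separate lemma; alternatively, you can sidestep it entirely by running your mean-dynamics expansion against the per-agent proximal points $P_{\lambda f}(x_{i,k})$, as the paper does, at the cost of reinstating Lemma~\ref{lem:lip}. Your secondary claims — $\dist(\hat x_k,\calM)=\calO(\|\vx_k-\bar\vx_k\|_F^2)$ from the structure of the normal space of the Stiefel manifold, and the parts (i)--(ii) wrap-up via a Polyak-type supermartingale argument and summation — are sound and match the paper's logic.
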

\begin{proof}
		According to the definition of Moreau envelope, we have
	\begin{align}\label{eq: moreauenvxikp1}
		f_{\lambda}({x}_{i,k+1}) & \leq  f(P_{\lambda f}({x}_{i,k})) + \frac{1}{2\lambda} \| P_{\lambda f}({x}_{i, k}) - {x}_{i, k+1}\|_F^2.
	\end{align}
	Next, we bound the term $\| P_{\lambda f}({x}_{i,k}) - {x}_{i,k+1}\|_F^2$ as follows,
	\begin{align}
		& \quad\;  \| P_{\lambda f}({x}_{i,k}) - {x}_{i,k+1}\|_F^2  \nonumber \\
		& =  \|P_{\lambda f}({x}_{i,k}) - \calR_{x_{i,k}} \left(\alpha {\rm grad} \varphi^t_i(\vx_k) - \beta_k \tilde{\nabla}_\calR f_i(x_{i,k}) \right) \|_F^2 \nonumber \\
		& \leq   \| x_{i,k} + \alpha {\rm grad} \varphi^t_i(\vx_k) - \beta_k \tilde{\nabla}_\calR f_i(x_{i,k}) - P_{\lambda f}({x}_{i,k})  \|_F^2 \nonumber \\
		& =  \| \alpha {\rm grad} \varphi^t_i(\vx_k) - \beta_k \tilde{\nabla}_\calR f_i(x_{i,k}) \|_F^2 + 2 \langle  x_{i,k} - P_{\lambda f}({x}_{i,k}), \alpha {\rm grad} \varphi^t_i(\vx_k) \rangle \nonumber\\
		& \quad  + 2 \beta_k \langle   P_{\lambda f}({x}_{i,k}) - x_{i,k},  \tilde{\nabla}_\calR f_i(x_{i,k}) \rangle + \|  x_{i,k} - P_{\lambda f}({x}_{i,k}) \|_F^2 \nonumber \\
		& \leq     \| \alpha {\rm grad} \varphi^t_i(\vx_k) - \beta_k \tilde{\nabla}_\calR f_i(x_{i,k}) \|_F^2 + 2 \langle  x_{i,k} - P_{\lambda f}({x}_{i,k}), \alpha {\rm grad} \varphi^t_i(\vx_k) \rangle \label{eq: bdproxikp1} \\
		& \quad  + 2 \beta_k \left(f_i(P_{\lambda f}({x}_{i,k})) - f_i(x_{i,k}) + \dfrac{\rho + L}{2} \| x_{i,k} - P_{\lambda f}({x}_{i,k})\|^2_F \right) +\|  x_{i,k} - P_{\lambda f}({x}_{i,k}) \|_F^2, \nonumber
	\end{align}
	where the first inequality is due to properties of the polar retraction in \citet[Lemma 1]{li2021weakly}, and the second inequality is from the Riemannian subgradient inequality in Lemma \ref{lem: riessubgineq}.
	
	Now, we provide bounds for the above terms, respectively.
	
	\noindent \textbf{Part 1}: $\| \alpha {\rm grad} \varphi^t_i(x_k) - \beta_k \tilde{\nabla}_\calR f_i(x_{i,k}) \|_F^2 + 2 \langle  x_{i,k} - P_{\lambda f}({x}_{i,k}), \alpha {\rm grad} \varphi^t_i(x_k) \rangle$.
	\begin{align}
		\sum_{i=1}^n \| \alpha {\rm grad} \varphi^t_i(\vx_k) - \beta_k \tilde{\nabla}_\calR f_i(x_{i,k}) \|_F^2
		& \leq 2 \alpha^2 \sum_{i=1}^n\|{\rm grad} \varphi^t_i(\vx_k) \|_F^2 + 2n \beta_k^2 L^2  \nonumber \\
		& \leq  2 \alpha^2 L_t^2 \| \vx_k - \bar{\vx}_k  \|_F^2 + 2n \beta_k^2 L^2, \label{eq: compbound1}
	\end{align}
	where the second inequality is due to \citet[(C.9)]{chen2021decentralized}.
	\begin{align}
		& \quad 2\alpha \sum_{i=1}^n \left\langle  x_{i,k} - P_{\lambda f}({x}_{i,k}),  {\rm grad} \varphi^t_i(\vx_k) \right\rangle \nonumber \\
		& = 2\alpha \sum_{i=1}^n \left\langle \bar{x}_k - P_{\lambda f}(\bar{x}_k), {\rm grad} \varphi^t_i(\vx_k) \right \rangle + 2\alpha \sum_{i=1}^n \left\langle x_{i,k} - \bar{x}_k + P_{\lambda f}(\bar{x}_k) - P_{\lambda f}(x_{i,k}), {\rm grad} \varphi^t_i(\vx_k) \right\rangle \nonumber \\
		& \leq 4\alpha \sqrt{r} \left\|\sum_{i=1}^n {\rm grad} \varphi^t_i(\vx_k) \right\|_F + \alpha  \sum_{i=1}^n \| x_{i,k} - \bar{x}_k + P_{\lambda f}(\bar{x}_k) - P_{\lambda f}(x_{i,k}) \|_F^2 + \alpha \sum_{i=1}^n \|{\rm grad} \varphi^t_i(\vx_k)  \|_F^2 \nonumber\\
		& \leq 4\alpha \sqrt{r}L_t \| \vx_k - \bar{\vx}_k \|_F^2 + 2 \alpha(1+ \hat{L}^2) \|\vx_k - \bar{\vx}_k \|_F^2 +\alpha L_t^2  \| \vx_k - \bar{\vx}_k \|_F^2,
		\label{eq: difmorgradlip}
	\end{align}
	where we use $\|\bar{x}_k - P_{\lambda f}(\bar{x}_k)  \|_F \leq 2 \sqrt{r}$ in the first inequality, the second inequality is due to \citet[(C.8), (C.9)]{chen2021decentralized}, and $\hat{L}\coloneqq (1 - \lambda(\rho + 3L))^{-1}$.

	\noindent \textbf{Part 2}: $f_i(P_{\lambda f}({x}_{i,k})) - f_i(x_{i,k})+ \frac{\rho + L}{2} \| x_{i,k} - P_{\lambda f}({x}_{i,k})\|^2_F$.
	
	\begin{align*}
		f_i(P_{\lambda f}({x}_{i,k})) - f_i(x_{i,k})
		& = f_i(P_{\lambda f}(x_{i,k})) - f_i(P_{\lambda f}(\bar{x}_k)) + f_i(P_{\lambda f}(\bar{x}_k)) - f_i(\bar{x}_k) + f_i(\bar{x}_k) - f_i(x_{i,k}) \\
		& \leq  L\|P_{\lambda f}(x_{i,k}) - P_{\lambda f}(\bar{x}_k) \|_F + f_i(P_{\lambda f}(\bar{x}_k)) - f_i(\bar{x}_k) + L \| x_{i,k} - \bar{x}_k\|_F \\
		& \leq L(\hat{L}+1)\|x_{i,k} - \bar{x}_k \|_F +  f_i(P_{\lambda f}(\bar{x}_k)) - f_i(\bar{x}_k),
	\end{align*}
	and 
	\begin{align*}
		\frac{\rho + L}{2} \| x_{i,k} - P_{\lambda f}({x}_{i,k})\|^2_F
		& \leq \frac{\rho + L}{2} \| x_{i,k}-\bar{x}_k + \bar{x}_k -P_{\lambda f}(\bar{x}_k) + P_{\lambda f}(\bar{x}_k) - P_{\lambda f}({x}_{i,k})\|^2_F \\
		& \leq (\rho + L)\| \bar{x}_k -P_{\lambda f}(\bar{x}_k)\|_F^2 + (\rho + L)\| x_{i,k}-\bar{x}_k + P_{\lambda f}(\bar{x}_k) - P_{\lambda f}({x}_{i,k})\|^2_F \\
		& \leq  (\rho + L)\| \bar{x}_k -P_{\lambda f}(\bar{x}_k)\|_F^2 + 2(\rho +L)(1+ \hat{L}^2)\|x_{i,k} - \bar{x}_k\|_F^2.
	\end{align*}
	Summing the above two inequalities for $i \in [n]$ gives
	\begin{align*}
		& \quad \sum_{i=1}^n \left(f_i(P_{\lambda f}({x}_{i,k})) - f_i(x_{i,k}) + \frac{\rho + L}{2} \| x_{i,k} - P_{\lambda f}({x}_{i,k})\|^2_F \right)\\
		& \leq L(\hat{L}+1)\sum_{i=1}^n \|x_{i,k}-\bar{x}_k \|_F + 2(\rho +L)(1+ \hat{L}^2) \sum_{i=1}^n \|x_{i,k} - \bar{x}_k\|_F^2 \\
		&\quad  + n\left(f(P_{\lambda f}(\bar{x}_k)) - f(\bar{x}_k) + (\rho+L)\| \bar{x}_k -P_{\lambda f}(\bar{x}_k)\|_F^2 \right).
	\end{align*}
	From the definition of $P_{\lambda f}(\bar{x}_k)$, if $0 < \lambda < \frac{1}{2(\rho+L)}$, we have 
	\begin{align*}
		& \quad \left(f(P_{\lambda f}(\bar{x}_k)) - f(\bar{x}_k) + (\rho+L)\| \bar{x}_k -P_{\lambda f}(\bar{x}_k)\|_F^2 \right) \\
		& = \left(f(P_{\lambda f}(\bar{x}_k)) + \frac{1}{2\lambda}\|P_{\lambda f}(\bar{x}_k) - \bar{x}_k \|_F^2 - f(\bar{x}_k) + (\rho+L-\frac{1}{2\lambda})\| \bar{x}_k -P_{\lambda f}(\bar{x}_k)\|_F^2 \right) \\
		& \leq (\rho+L-\frac{1}{2\lambda})\| \bar{x}_k -P_{\lambda f}(\bar{x}_k)\|_F^2.
	\end{align*}
Denote $\bar{f}_\lambda(\vx_{k+1}) \coloneqq \frac{1}{n} \sum_{i=1}^n f_\lambda(x_{i,k+1})$. Combining the above bounds with \eqref{eq: moreauenvxikp1} as well as \eqref{eq: bdproxikp1} yields
	\begin{align} \label{eq: envelopeappoxescent}
		\bar{f}_\lambda(\vx_{k+1}) - 	\bar{f}_\lambda(\vx_{k}) \leq a_k + \frac{\beta_k^2 L^2}{\lambda}+ \frac{\beta_k(\rho+L - \frac{1}{2\lambda})}{\lambda} \| \bar{x}_k - P_{\lambda f}(\bar{x}_k)\|_F^2,
	\end{align}
	with
	\begin{align*}
		a_k 
		=&   \frac{1}{2n\lambda}\left(2\alpha^2 L_t^2 + 4\alpha \sqrt{r}L_t + 2\alpha (1+\hat{L}^2) + \alpha L_t^2 + 4\beta_k(\rho+L)(1+\hat{L}^2) \right) \|\vx_{k} - \bar{\vx}_k\|_F^2 \\
		& + \frac{\beta_k}{n\lambda}L(\hat{L}+1) \sum_{i=1}^n\|x_{i,k} - \bar{x}_k\|_F.
	\end{align*}
	The first term of $a_k$ is $\calO(\frac{nL^2\beta_k^2}{(1-\rho_t)^2})$, and the second term of $a_k$ is $ \calO(\frac{nL\beta_k^2}{1-\rho_t} )$. Because $f(x)$ is lower bounded on $\calM$, we have $f_\lambda(x)$ is also lower bounded on $\calM$. It follows that
	\begin{align}
		\bar{f}_\lambda(x_{k+1}) - \min_{x\in \calM} f_\lambda(x) \leq  \bar{f}_\lambda(x_{k}) - \min_{x\in \calM} f_{\lambda}(x)  + \calO(\beta_k^2).	
	\end{align}
	By using \citet[Lemma 2]{polyak1987introduction} and $\sum_{k=0}^\infty \beta_k^2 < \infty$, we have $\{ \bar{f}_\lambda(x_k)\}$ converges to some value $\bar{f}_\lambda$.
	
Note that $f_\lambda(x)$ is continuous. Since $\| x_{i,k} - \bar{x}_k \|_F \rightarrow 0$, we have 
	\begin{align*}
		| f_\lambda(x_{i,k}) - f_\lambda(\bar{x}_k) | \rightarrow 0
	\end{align*}
	and 
	\begin{align*}
		| \bar{f}_\lambda(\vx_k) - f_\lambda(\bar{x}_k)|^2 & = |\frac{1}{n}\sum_{i=1}^n f_\lambda(x_{i,k})  - f_\lambda(\bar{x}_k)|^2  \leq \frac{1}{n}\sum_{i=1}^n |f_\lambda(x_{i,k}) - f_\lambda(\bar{x}_k)|^2 \rightarrow 0.
	\end{align*}
	Rearranging the terms in the inequality \eqref{eq: envelopeappoxescent} yields
	\begin{align} \label{eq: complexderive1}
		\frac{\beta_k}{\lambda}(\frac{1}{2\lambda} - \rho - L) \| \bar{x}_k - P_{\lambda f}(\bar{x}_k)\|_F^2 \leq \bar{f}_\lambda(\vx_k) - \bar{f}_\lambda(\vx_{k+1}) + a_k + \frac{\beta_k^2 L^2}{\lambda}.
	\end{align}
	Summing \eqref{eq: complexderive1} over $k=0,1,\dots$ gives
	\begin{align}
		\sum_{k=0}^\infty  \frac{\beta_k}{\lambda}(\frac{1}{2\lambda} - \rho - L) \| \bar{x}_k - P_{\lambda f}(\bar{x}_k)\|_F^2 \leq \bar{f}_\lambda(\vx_0) - \min_{y \in \mathcal{M}} f_\lambda(y) + \sum_{k=0}^\infty a_k + \sum_{k=0}^\infty \frac{\beta_k^2 L^2}{\lambda}.
	\end{align}
	By dividing both sides by $ \sum_{k=0}^\infty  \frac{\beta_k}{\lambda}(\frac{1}{2\lambda} - \rho - L)$, we obtain
	\begin{align*}
		\inf_{k=0,1,\dots} \| \bar{x}_k - P_{\lambda f}(\bar{x}_k)\|_F^2 \leq \frac{\lambda}{(\frac{1}{2\lambda} - \rho - L)} \frac{\bar{f}_\lambda(\vx_0) - \min_{y \in \mathcal{M}} f_\lambda(y) + \sum_{k=0}^\infty a_k + \sum_{k=0}^\infty \frac{\beta_k^2 L^2}{\lambda}}{\sum_{k=0}^\infty  \beta_k}.
	\end{align*}
 This finishes the proof.
\end{proof}

\begin{remark}
According to the proof of Theorem~\ref{thm: iteration complexity}.(ii), if $\beta_k = \calO(1/\sqrt{k})$, for sufficiently large $K$, we have 
\begin{align*}
	\quad \min_{1\leq k\leq K} \| \bar{x}_k - P_{\lambda f}(\bar{x}_k)\|_F^2
	= \calO\left(\frac{\bar{f}_\lambda(\vx_0) - \min\limits_{y \in \mathcal{M}} f_\lambda(y)}{\sqrt{K}} + \frac{L^2}{(1-\rho_t)^2}\frac{\log K}{\sqrt{K}}  \right).
\end{align*}
The above result indicates that the iteration complexity of DRSM is $\mathcal{O}(\varepsilon^{-2} \log^2(\varepsilon^{-1}))$.
\end{remark}

\section{Local Linear Convergence Under Sharpness} \label{sec: locconv}
In this section, we aim at deriving stronger convergence guarantee for DRSM when applied to problem \eqref{eq: dwcoptstiefel} with certain additional structures besides just weak convexity. In the centralized setting, to establish the (local) linear convergence rate of iterative methods for non-convex problems or convex but not strongly-convex problems, certain regularity conditions (e.g., the local error bound condition \citep{zhou2017unified,liu2017estimation,yue2019family,zhu2021orthogonal}, the local Kurdyka-\L ojasiewicz inequality \citep{liu2019quadratic,wang2021linear,li2021convergence}, or the sharpness property \citep{davis2018subgradient,li2020nonconvex,liu2020nonconvex,li2021weakly}) are usually required. In addition, there have been many attempts to establish strong convergence results in stochastic or decentralized settings under the aforementioned regularity properties; see, e.g., \citet{so2017non,chen2021distributed,tian2019asynchronous}. Motivated by such a line of research, we show that if problem \eqref{eq: dwcoptstiefel0} additionally possesses the following sharpness property \citep{burke1993weak,karkhaneei2019nonconvex,li2011weak}, then with geometrically diminishing stepsizes (i.e., $\beta_k = \mu_0 \gamma^k$ with $\mu_0 >0$ and $\gamma \in (0,1)$), our proposed DRSM for problem \eqref{eq: dwcoptstiefel} would converge in a linear rate, provided that it is initialized with a suitable point. 
\begin{defi}[Sharpness]
A set $\calX \subseteq \calM$ is called a set of weak sharp minima for the function $f: \R^{d \times r} \rightarrow \R$ with parameter $\kappa > 0$ if there exists a constant $B>0$ such that for any $x \in U_\calX(B) \cap \calM$, we have
\begin{align} \label{eq: sharpdef1}
	f(x)-f(y) \geq \kappa\ {\rm dist}(x,\calX)  
\end{align}
for all $y \in \calX$.
\end{defi}

Note that if $\calX$ is a set of weak sharp minima for $f$, then it is the set of minimizers of $f$ over $U_\calX(B) \cap \calM$. In addition, when $f$ is continuous (e.g., if $f$ is weakly convex; see Section \ref{subsec: subdifferential}), then $\calX$ can be chosen as a closed set. 

We first estimate the deviation from the mean $\|\vx_k - \bar{\vx}_k \|_F$ when using geometrically diminishing stepsizes.
\begin{lem} \label{lem: consensus-error-sharpness}
Let the stepsizes in DRSM be chosen as $\beta_k = \mu_0 \gamma^k, k\geq 0$, where $0 <\mu_0 \leq \min \left\{\frac{1-\rho_t}{L} \delta_1, \frac{\alpha \delta_1}{5L} \right\}$ and $\rho_t^\delta \leq \gamma <1, \delta\in(0,1)$. If Assumption \ref{assum: Connected graph} holds and $\vx_0 \in \calN$, then we have $\|\vx_k - \bar{\vx}_k \|_F = \calO(\beta_k)$ .
\end{lem}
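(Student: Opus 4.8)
The plan is to substitute the geometrically diminishing stepsizes directly into the consensus recursion already furnished by Lemma~\ref{thm: consensusbound}. First I would check that Lemma~\ref{thm: consensusbound} is applicable: since $\beta_k = \mu_0\gamma^k \le \mu_0 \le \min\{\frac{1-\rho_t}{L}\delta_1,\ \frac{\alpha\delta_1}{5L}\}$ for every $k\ge 0$ and $\vx_0\in\calN$, the hypotheses of that lemma hold, so $\vx_k\in\calN$ for all $k$ and
\begin{align*}
\|\vx_{k+1}-\bar{\vx}_{k+1}\|_F \le \rho_t^{k+1}\|\vx_0-\bar{\vx}_0\|_F + \sqrt{n}L\sum_{l=0}^{k}\rho_t^{k-l}\beta_l .
\end{align*}

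Next I would bound the two terms on the right using the assumed relation $\rho_t^\delta \le \gamma < 1$ with $\delta\in(0,1)$. For the first term, note that $\delta\le 1$ gives $(k+1)/\delta \ge k+1$, so raising $\rho_t^\delta\le\gamma$ to the power $(k+1)/\delta$ and using $\gamma<1$ yields $\rho_t^{k+1}\le \gamma^{(k+1)/\delta}\le \gamma^{k+1} = \beta_{k+1}/\mu_0$. For the sum, since $\delta\in(0,1)$ and $\rho_t\in(0,1)$ we have $\gamma\ge\rho_t^\delta > \rho_t$, hence $\gamma/\rho_t>1$; writing $\sum_{l=0}^k\rho_t^{k-l}\beta_l = \mu_0\rho_t^k\sum_{l=0}^k(\gamma/\rho_t)^l$ and summing the geometric series,
\begin{align*}
\mu_0\rho_t^k\sum_{l=0}^{k}\left(\frac{\gamma}{\rho_t}\right)^l \le \mu_0\rho_t^k\cdot\frac{(\gamma/\rho_t)^{k+1}}{\gamma/\rho_t-1} = \mu_0\cdot\frac{\gamma^{k+1}}{\gamma-\rho_t} = \frac{\beta_{k+1}}{\gamma-\rho_t}.
\end{align*}

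Combining the two estimates gives
\begin{align*}
\|\vx_{k+1}-\bar{\vx}_{k+1}\|_F \le \left(\frac{\|\vx_0-\bar{\vx}_0\|_F}{\mu_0} + \frac{\sqrt{n}L}{\gamma-\rho_t}\right)\beta_{k+1},
\end{align*}
which is precisely $\|\vx_k-\bar{\vx}_k\|_F = \calO(\beta_k)$ with an explicit constant depending on $n,L,\mu_0,\gamma,\rho_t$ and the initial consensus gap. The whole argument is a direct plug-in into Lemma~\ref{thm: consensusbound}; the only place requiring any care is the pair of elementary inequalities $\rho_t^{k+1}\le\gamma^{k+1}$ and the convergence/summation of $\sum_l(\gamma/\rho_t)^l$, both of which rely on the relation $\rho_t^\delta\le\gamma$ together with $\delta\in(0,1)$ and $\rho_t,\gamma\in(0,1)$. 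I expect this bookkeeping to be the main (and only) obstacle.
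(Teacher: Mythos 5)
Your proposal is correct and follows essentially the same route as the paper: verify that the stepsize condition of Lemma~\ref{thm: consensusbound} holds, plug $\beta_l=\mu_0\gamma^l$ into \eqref{eq: consensusboundgeneralbeta}, bound $\rho_t^{k+1}\le\gamma^{k+1}$ via $\rho_t\le\rho_t^\delta\le\gamma$, and sum the resulting geometric series (the paper expresses the constant as $\frac{\mu_0\sqrt{n}L}{\gamma(1-\gamma^{1/\delta-1})}$, which is just a slightly looser version of your $\frac{\mu_0\sqrt{n}L}{\gamma-\rho_t}$ obtained from $\rho_t\le\gamma^{1/\delta}$).
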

\begin{proof}
		The condition $\mu_0 \leq \min \left\{\frac{1-\rho_t}{L} \delta_1, \frac{\alpha \delta_1}{5L} \right\}$ implies $\beta_k \leq \min \left\{\frac{1-\rho_t}{L} \delta_1, \frac{\alpha \delta_1}{5L} \right\}$ for all $k \geq 0$. Hence, the condition on $\beta_k$ in Lemma \ref{thm: consensusbound} is satisfied. The inequality \eqref{eq: consensusboundgeneralbeta} yields
\begin{align}
	\| \vx_{k+1} - \bar{\vx}_{k+1}\|_F
	& \leq \rho_t^{k+1} \|\vx_0 - \bar{\vx}_0 \|_F + \sqrt{n}L \sum_{l=0}^k \rho_t^{k-l}\beta_l \nonumber\\
	& \leq \|\vx_0 - \bar{\vx}_0 \|_F  \gamma^{k+1} + \frac{\mu_0\sqrt{n}L}{\gamma} \left(\sum_{l=0}^k \left(\frac{\rho_t}{\gamma}\right)^{k-l} \right) \gamma^{k+1} \nonumber \\
	& \leq \left( \|\vx_0 - \bar{\vx}_0 \|_F  + \frac{\mu_0 \sqrt{n}L}{\gamma(1-\gamma^{\frac{1}{\delta}-1})} \right) \gamma^{k+1}, \label{eq: consensuserrorsharp}
\end{align}
where the second inequality is from $\rho_t \leq \gamma$.
\end{proof}

The following two assumptions are required to prove the local linear convergence of DRSM under the sharpness condition.
\begin{assum}[Isolated optimal solutions] \label{assum: isolated}
There exists a weak sharp minimum $x^* \in \calM$ of problem \eqref{eq: dwcoptstiefel0} that is isolated.
\end{assum}
\begin{assum} \label{assum: mu0}
Let $\vx_0^\top = (x_{1,0}^\top,\dots,x_{n,0}^\top)$ be the initial point of DRSM. Let $\Gamma \geq 3$ be a constant. Define
{\small	\begin{align}
		&e_0\coloneqq \min \left\{  \max \left\{ \frac{\kappa}{(\rho+L) \Gamma}, \sqrt{\sum_{i=1}^n \frac{\| x_{i,0} - x^* \|_F^2}{n} }\right\}, \frac{B}{\Gamma}\right\}, \label{eq: e0-def}\\
		&  a \coloneqq 2(L+\kappa +\alpha LL_t)L,\quad b \coloneqq (4\alpha \sqrt{r} + 2\alpha^2r)L_tL^2, \label{eq: ab-def} \\
		& q \coloneqq \frac{2\kappa e_0}{\Gamma} - (\rho+L)e_0^2.
	\end{align}
}
Then, the constant $\mu_0 >0$ is strictly less than
{\small	\begin{align} \label{eq: assummu0}
		\min \left\{\frac{e_0 }{2\kappa - (\rho+L)e_0}, \frac{q}{L^2e_0^2 + \frac{4(a+b)}{(1-\rho_t)^2}}, \frac{(1-\rho_t)e_0}{2\sqrt{n}L} \right\}.
	\end{align}
}
\end{assum}
\begin{remark}
Although Assumption \ref{assum: isolated} seems a little restrictive, we emphasize that existing results in the Euclidean setting also rely on similar assumptions \citep[Definition \uppercase\expandafter{\romannumeral2}.2]{chen2021distributed}. Moreover, even with such an additional assumption, it is challenging to establish the local linear convergence of DRSM under the sharpness condition. We leave the question of whether Assumption \ref{assum: isolated} is needed in the proof of the said result as future work.
\end{remark}

With the above setup, we establish the local linear convergence result in the following theorem.
\begin{thm}[Local linear convergence under the sharpness condition] \label{thm: sharplinear} Suppose that the conditions in Lemma \ref{lem: consensus-error-sharpness}, Assumption \ref{assum: isolated}, and Assumption \ref{assum: mu0} hold. Suppose further that the initial point $\vx_0$ satisfies the following two conditions:
\begin{align}
	\sum_{i=1}^n \| x_{i,0} - x^* \|_F^2 & < \frac{n}{\Gamma^2} \min\left\{\left( \frac{2 \kappa}{\rho+L}\right)^2, B^2 \right\}, \label{eq: sharpnessthmcondition1}\\
	\| \vx_0 - \bar{\vx}_0\|_F & =0.\label{eq: sharpnessthmcondition3}
\end{align}
Then, there exists a sufficiently small constant $\delta>0$ such that for $\gamma = \rho_t^\delta$, we have 
\begin{align}
	& \sum_{i=1}^n \| x_{i,k} - x^*\|_F^2 \leq n \gamma^{2k}e_0^2, \label{eq: sharpconclu1}\\
	& \| x_{i,k} - x^*\|_F^2 \leq \Gamma^2 \gamma^{2k}e_0^2, \quad \forall i \in [n]\label{eq: sharpconclu2}
\end{align}
for any iterative sequence $\{x_{i,k}\}_{k\geq 0}$ generated by DRSM.
\end{thm}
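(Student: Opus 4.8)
The plan is to prove \eqref{eq: sharpconclu1}, equivalently $e_k\coloneqq\sqrt{S_k/n}\le\gamma^k e_0$ with $S_k\coloneqq\sum_{i=1}^n\|x_{i,k}-x^*\|_F^2$, together with \eqref{eq: sharpconclu2}, by simultaneous induction on $k$. For the base case $k=0$: by \eqref{eq: sharpnessthmcondition1} and $\Gamma\ge3$ one has $\sqrt{S_0/n}<\tfrac1\Gamma\min\{2\kappa/(\rho+L),B\}$, and running through the two cases in \eqref{eq: e0-def} this forces $e_0\ge\sqrt{S_0/n}$, i.e. \eqref{eq: sharpconclu1}; while \eqref{eq: sharpnessthmcondition3} makes all $x_{i,0}$ coincide, so $\|x_{i,0}-x^*\|_F^2=S_0/n\le e_0^2\le\Gamma^2 e_0^2$, i.e. \eqref{eq: sharpconclu2}.

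Assume both bounds hold at step $k$. The stepsize hypothesis of Lemma \ref{lem: consensus-error-sharpness} is met, so with $\|\vx_0-\bar{\vx}_0\|_F=0$ that lemma gives $\|\vx_k-\bar{\vx}_k\|_F\le D\gamma^k$ with $D=\mathcal{O}(\sqrt n L\mu_0/(1-\rho_t))$, and for $\delta$ small the third bound of \eqref{eq: assummu0} makes $D<e_0\le(\Gamma-2)e_0$. For the one-step estimate I start from the non-expansiveness of the polar retraction \citep[Lemma 1]{li2021weakly},
\begin{align*}
\|x_{i,k+1}-x^*\|_F^2\le\bigl\|x_{i,k}+\alpha\,\grad\varphi^t_i(\vx_k)-\beta_k\tilde{\nabla}_\calR f_i(x_{i,k})-x^*\bigr\|_F^2,
\end{align*}
expand, and sum over $i\in[n]$. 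The squared-step term and the $\varphi^t$ cross term $2\alpha\sum_i\langle x_{i,k}-x^*,\grad\varphi^t_i(\vx_k)\rangle$ are bounded by $\mathcal{O}(\|\vx_k-\bar{\vx}_k\|_F^2+n\beta_k^2L^2)$ exactly as in Part 1 of the proof of Theorem \ref{thm: iteration complexity} (using $\|\hat{x}_k\|_F=\sqrt r$ and \citep[(C.8),(C.9)]{chen2021decentralized}), which is the origin of the constant $b$. For the subgradient cross term, Lemma \ref{lem: riessubgineq} gives $-2\beta_k\sum_i\langle x_{i,k}-x^*,\tilde{\nabla}_\calR f_i(x_{i,k})\rangle\le-2\beta_k\sum_i\bigl(f_i(x_{i,k})-f_i(x^*)\bigr)+\beta_k(\rho+L)S_k$, and writing $\sum_i(f_i(x_{i,k})-f_i(x^*))=\sum_i(f_i(x_{i,k})-f_i(\bar{x}_k))+n(f(\bar{x}_k)-f(x^*))$ I bound the first sum below by $-L\sqrt n\|\vx_k-\bar{\vx}_k\|_F$ and the second, via the sharpness property, by $n\kappa\|\bar{x}_k-x^*\|_F$ (the remaining $\beta_k\|\vx_k-\bar{\vx}_k\|_F$-type terms here furnish the constant $a$). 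Invoking sharpness is legitimate: the induction hypothesis and Lemma \ref{lem: liplikepro} give $\|\bar{x}_k-x^*\|_F\le2\gamma^k e_0\le2B/\Gamma<B$, and — shrinking $B$ if needed so that $\calX\cap U_\calX(B)\cap\calM=\{x^*\}$, which is possible by Assumption \ref{assum: isolated} — one has $\dist(\bar{x}_k,\calX)=\|\bar{x}_k-x^*\|_F$.

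Using the elementary bound $\|\bar{x}_k-x^*\|_F\ge e_k-3\|\vx_k-\bar{\vx}_k\|_F$ (from $\|\bar{x}_k-x^*\|_F\ge\|\hat{x}_k-x^*\|_F-\dist(\hat{x}_k,\calM)$, the variance identity $n\|\hat{x}_k-x^*\|_F^2=S_k-\sum_i\|x_{i,k}-\hat{x}_k\|_F^2$, and $\dist(\hat{x}_k,\calM)\le2\|\vx_k-\bar{\vx}_k\|_F$) and substituting $\|\vx_k-\bar{\vx}_k\|_F\le D\gamma^k=\mathcal{O}(\mu_0\gamma^k)$ so that every noise contribution is $\mathcal{O}(\mu_0^2\gamma^{2k})$, the estimates collect into a scalar recursion
\begin{align*}
e_{k+1}^2\le\bigl(1+\beta_k(\rho+L)\bigr)e_k^2-2\beta_k\kappa\,e_k+\epsilon_k,\qquad\epsilon_k\le\mu_0^2\Bigl(L^2e_0^2+\tfrac{4(a+b)}{(1-\rho_t)^2}\Bigr)\gamma^{2k},
\end{align*}
with $\beta_k=\mu_0\gamma^k$. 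Its right side is convex in $e_k$, so over $e_k\in[0,\gamma^k e_0]$ the maximum is attained at an endpoint, and I check both. At $e_k=0$ the requirement $\epsilon_k\le\gamma^{2(k+1)}e_0^2$ follows, for $\delta$ small, from the first and third bounds of \eqref{eq: assummu0} (which bound $\mu_0 q$, hence $\mu_0^2(L^2e_0^2+\tfrac{4(a+b)}{(1-\rho_t)^2})$, by a multiple of $e_0^2$) together with $\gamma^2\ge1/(\Gamma-1)$. At $e_k=\gamma^k e_0$, dividing by $\gamma^{2k}$ and using $\gamma^k\le1$, the requirement becomes $\mu_0\bigl[(\rho+L)e_0^2-2\kappa e_0\bigr]+\mu_0^2\bigl(L^2e_0^2+\tfrac{4(a+b)}{(1-\rho_t)^2}\bigr)\le(\gamma^2-1)e_0^2$; here \eqref{eq: sharpnessthmcondition1} makes the bracket negative (it gives $e_0<2\kappa/((\rho+L)\Gamma)$), and the second bound of \eqref{eq: assummu0} gives $\mu_0^2(L^2e_0^2+\tfrac{4(a+b)}{(1-\rho_t)^2})<\mu_0 q$, so the left side is $<\mu_0[(\rho+L)e_0^2-2\kappa e_0]+\mu_0 q=-2\mu_0\kappa e_0(\Gamma-1)/\Gamma<0$, a negative constant independent of $\delta$; choosing $\delta>0$ small enough that $(1-\rho_t^{2\delta})e_0^2$ lies below this slack closes the induction for \eqref{eq: sharpconclu1}. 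Then $\|x_{i,k+1}-x^*\|_F\le\|\vx_{k+1}-\bar{\vx}_{k+1}\|_F+\|\bar{x}_{k+1}-x^*\|_F\le D\gamma^{k+1}+2\gamma^{k+1}e_0\le\Gamma\gamma^{k+1}e_0$ gives \eqref{eq: sharpconclu2}.

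The main obstacle is the one-step estimate and its bookkeeping: one must account for every cross term produced by the three coupled error sources — manifold curvature (absorbed through the polar retraction), the consensus gap $\|\vx_k-\bar{\vx}_k\|_F$, and the mismatch between each $f_i$ at $x_{i,k}$ and the averaged $f$ at $\bar{x}_k$ — so that the pooled noise coefficient is exactly $L^2e_0^2+4(a+b)/(1-\rho_t)^2$, while simultaneously arranging that the favorable sharpness term $-2\beta_k\kappa e_k$ (for which only an \emph{upper} bound on $e_k$ is available) still dominates, which is why the argument is forced through the convexity of the quadratic and an endpoint check rather than a direct contraction. A related subtlety — and the reason Assumption \ref{assum: isolated} and the cushion $\Gamma\ge3$ are imposed — is keeping $\bar{x}_k$ inside the sharpness neighborhood, with $x^*$ its nearest optimizer, throughout the iteration.
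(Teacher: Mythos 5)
Your proposal is correct in substance and follows the paper's overall skeleton (simultaneous induction on \eqref{eq: sharpconclu1}--\eqref{eq: sharpconclu2}, the same base case, non-expansiveness of the polar retraction, the Riemannian subgradient inequality plus sharpness applied at $\bar{x}_k$, Lemma \ref{lem: consensus-error-sharpness} for $\|\vx_k-\bar{\vx}_k\|_F=\calO(\mu_0\gamma^k)$, and the same final per-agent verification), but it handles the crux --- extracting value from the sharpness term when only an \emph{upper} bound on the distance is available --- by a genuinely different device. The paper writes $n\|\bar{x}_k-x^*\|_F\ge\sum_{i}(\|x_{i,k}-x^*\|_F-\|x_{i,k}-\bar{x}_k\|_F)$ and then invokes \citet[Lemma A.1]{chen2021distributed} with $c=\Gamma$, which crucially relies on the per-agent cap $\|x_{i,k}-x^*\|_F\le\Gamma e_0\gamma^k$ from the induction hypothesis \eqref{eq: sharpconclu2}; this is precisely what produces the $\Gamma$-discounted sharpness contribution and the constant $q=\frac{2\kappa e_0}{\Gamma}-(\rho+L)e_0^2$. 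You instead lower-bound $\|\bar{x}_k-x^*\|_F$ by the root-mean-square error $e_k$ minus a consensus term, via the Euclidean mean $\hat{x}_k$ and the variance identity, collapse everything into a scalar quadratic recursion in $e_k$, and deal with the one-sided information by convexity and a two-endpoint check on $[0,\gamma^k e_0]$. What this buys: the inductive step for \eqref{eq: sharpconclu1} never uses \eqref{eq: sharpconclu2}, the external Lemma A.1 is avoided, and the sharpness coefficient $-2\beta_k\kappa e_k$ is kept undiscounted; the price is the extra geometric inequality and the additional endpoint condition $\gamma^2\ge 1/(\Gamma-1)$, harmless for small $\delta$ since $\Gamma\ge3$. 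Two bookkeeping caveats for a full write-up: (i) use $\|\hat{x}_k-\bar{x}_k\|_F\le\frac{1}{\sqrt{n}}\|\vx_k-\bar{\vx}_k\|_F$ (and likewise for $\|\hat{x}_k-x^*\|_F$) rather than the crude constants $2$ and $3$, otherwise the $\kappa$-part of your noise acquires a spurious $\sqrt{n}$ and no longer matches the constants $a,b$ of \eqref{eq: ab-def} that Assumption \ref{assum: mu0} controls; (ii) your pooled noise actually contains $\mu_0^2L^2$ rather than $\mu_0^2L^2e_0^2$, so the literal second bound in \eqref{eq: assummu0} does not quite cover it --- but the paper's own final step has exactly the same $L^2$ versus $L^2e_0^2$ mismatch, so this is inherited rather than a defect of your route; also, at the $e_k=0$ endpoint you mean the first and second bounds of \eqref{eq: assummu0}, not the first and third.
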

\begin{proof}
		We prove it by induction. First, when $k=0$, we need to show 
\begin{align}
	\sum_{i=1}^n \| x_{i,0} - x^*\|_F^2 \leq ne_0^2, \text{ and }\| x_{i,0} - x^*\|_F^2 \leq \Gamma^2e_0^2 \label{eq: sharpnessthmpf0}.
\end{align}
Due to the facts of $x_{1,0} = \dots = x_{n,0}$ by \eqref{eq: sharpnessthmcondition3} and $\Gamma \geq 3$ in Assumption \ref{assum: mu0}, it suffices to show $\| x_{1,0} - x^*\|_F^2 \leq e_0^2$.
This can be done by considering the following two cases:
\begin{enumerate}[i)]
	\item If $\sqrt{\frac{1}{n} \sum_{i=1}^n \| x_{i,0} - x^*\|_F^2} \leq \frac{\kappa}{(\rho+L) \Gamma}$, which implies $\| x_{1,0} - x^* \|_F^2 \leq \frac{\kappa^2}{(\rho+L)^2 \Gamma^2 } $, then $e_0 = \min \left\{ \frac{\kappa}{(\rho+L) \Gamma}, \frac{B}{\Gamma} \right\}$. From condition \eqref{eq: sharpnessthmcondition1} we have $\| x_{1,0} - x^* \|_F^2 < \frac{B^2}{\Gamma^2}$. Thus, it follows that
	\begin{equation*}
		\| x_{1,0} - x^* \|_F^2 \leq \min \left\{ \frac{B^2}{\Gamma^2},  \frac{\kappa^2}{(\rho+L)^2 \Gamma^2 } \right\} = e_0^2.
	\end{equation*}
	\item If $\sqrt{\frac{1}{n} \sum_{i=1}^n \| x_{i,0} - x^*\|_F^2} > \frac{\kappa}{(\rho+L) \Gamma}$, then we obtain
	\[
	e_0 = \min \left\{ \sqrt{\frac{1}{n} \sum_{i=1}^n \| x_{i,0} - x^*\|_F^2}, \frac{B}{\Gamma} \right\} = \sqrt{\frac{1}{n} \sum_{i=1}^n \| x_{i,0} - x^*\|_F^2} =  \| x_{1,0} - x^*\|_F^2,
	\]
	where the second equality is due to \eqref{eq: sharpnessthmcondition1}.
\end{enumerate}

Assume that the inequalities \eqref{eq: sharpconclu1} and \eqref{eq: sharpconclu2} hold for $k\geq 0$, it is required to prove that similar inequalities hold for $k+1$. According to the updating rule of \eqref{eq: algiteratedrsm}, we have 
\begin{align}
	& \quad \sum_{i=1}^n \| {x}_{i,k+1} - x^*\|_F^2 \nonumber \\
	& = \sum_{i=1}^n\| \calR_{x_{i,k}} \left(-\alpha \operatorname{grad} \varphi^t_i(\vx_k) - \beta_k \tilde{\nabla}_\calR f_i(x_{i,k}) \right) - x^* \|_F^2  \nonumber \\
	& \leq  \sum_{i=1}^n \| x_{i,k} - \alpha \operatorname{grad} \varphi^t_i(\vx_k) - \beta_k \tilde{\nabla}_\calR f_i(x_{i,k}) - x^*  \|_F^2 \nonumber \\
	& \leq \sum_{i=1}^n  \| x_{i,k} - \alpha \operatorname{grad} \varphi^t_i(\vx_k) - x^* \|_F^2 - \sum_{i=1}^n 2 \left\langle x_{i,k} - \alpha \operatorname{grad} \varphi^t_i(\vx_k) - x^*, \beta_k \tilde{\nabla}_\calR f_i(x_{i,k})\right\rangle +  n \beta_k^2 L^2, \label{eq: sharptheoryiter}
\end{align}
where the first inequality is due to properties of polar retraction in \citet[Lemma 2.3]{chen2021decentralized} and the last inequality comes from the fact $\| \tilde{\nabla}_\calR f_i(x_{i,k})\|_F \leq L$ in Section \ref{subsec: subdifferential}.
Regarding the first part in the right-hand side of \eqref{eq: sharptheoryiter}, it follows from \citet[(D.8)]{chen2021decentralized} that
\begin{align*}
	&\quad \sum_{i=1}^n  \| x_{i,k} - \alpha \operatorname{grad} \varphi^t_i(\vx_k) - x^* \|_F^2 \\
	& = \sum_{i=1}^n \|(1-\alpha)x_{i,k} + \alpha \sum_{j=1}^n W_{ij}^t x_{j,k} + \frac{\alpha}{2} x_{i,k} \sum_{j=1}^n W_{ij}^t(x_{i,k} - x_{j,k})^\top (x_{i,k} - x_{j,k}) -x^* \|_F^2
	\\
	& \leq \sum_{i=1}^n  \| (1-\alpha)x_{i,k} + \alpha \sum_{j=1}^n W_{ij}^t x_{j,k} - x^*\|_F^2 + \sum_{i=1}^n \| \frac{\alpha}{2} \sum_{j=1}^n W_{ij}^t(x_{i,k} - x_{j,k})^\top (x_{i,k} - x_{j,k})\|_F^2 \\
	&\quad +\sum_{i=1}^n  \left \langle \alpha x_{i,k} \sum_{j=1}^n W_{ij}^t(x_{i,k} - x_{j,k})^\top (x_{i,k} - x_{j,k}),  (1-\alpha)x_{i,k} + \alpha \sum_{j=1}^n W_{ij}^t x_{j,k} - x^* \right\rangle\\
	&\leq \sum_{i=1}^n \| x_{i,k}-x^*\|_F^2 +  2 \sqrt{r} \sum_{i=1}^n \alpha\|  \sum_{j=1}^n W_{ij}^t(x_{i,k} - x_{j,k})^\top (x_{i,k} - x_{j,k})\|_F  \\
	& \quad + \sum_{i=1}^n \| \frac{\alpha}{2} \sum_{j=1}^n W_{ij}^t(x_{i,k} - x_{j,k})^\top (x_{i,k} - x_{j,k})\|_F^2 \\
	& \leq \sum_{i=1}^n \| x_{i,k}-x^*\|_F^2 +  2 \sqrt{r} \sum_{i=1}^n \alpha\|  \sum_{j=1}^n W_{ij}^t(x_{i,k} - x_{j,k})^\top (x_{i,k} - x_{j,k})\|_F \\
	& \quad + \sum_{i=1}^n \alpha^2 r \| \sum_{j=1}^n W_{ij}^t(x_{i,k} - x_{j,k})^\top (x_{i,k} - x_{j,k})\|_F \\
	& \leq \sum_{i=1}^n \| x_{i,k}-x^*\|_F^2  + 4\alpha \sqrt{r}L_t \|\vx_k - \bar{\vx}_k\|_F^2 + 2\alpha^2r L_t \|\vx_k - \bar{\vx}_k\|_F^2 .
\end{align*}
where the second inequality uses \citet[(\uppercase\expandafter{\romannumeral2}.3)]{chen2021distributed}, the third inequality is due to the fact that
\begin{equation*}
	\| \sum_{j=1}^n W_{ij}^t(x_{i,k} - x_{j,k})^\top (x_{i,k} - x_{j,k})\|_F \leq  \sum_{j=1}^n W_{ij}^t \| x_{i,k} - x_{j,k} \|_F^2 \leq 4r,
\end{equation*}
and the last inequality follows from \citet[Proof of Lemma 10]{chen2021local} that 
\begin{equation}
	\sum_{i=1}^n\|  \sum_{j=1}^n W_{ij}^t(x_{i,k} - x_{j,k})^\top (x_{i,k} - x_{j,k})\|_F \leq 2L_t \| \vx	_k - \bar{\vx}_k\|_F^2.
\end{equation}

Next, let us bound the second part in the right-hand side of \eqref{eq: sharptheoryiter}, by the Riemannian subgradient inequality \eqref{eq: riessubgineq} and the sharpness condition \eqref{eq: sharpdef1}, we obtain 
\begin{align*}
	& \sum_{i=1}^n 2\langle -x_{i,k} + \alpha \operatorname{grad} \varphi^t_i(\vx_k) + x^*, \beta_k \tilde{\nabla}_\calR f_i(x_{i,k})\rangle \\
	=&  \sum_{i=1}^n 2 \beta_k \langle   x^* - x_{i,k},  \tilde{\nabla}_\calR f_i(x_{i,k}) \rangle +  \sum_{i=1}^n 2\alpha \beta_k\langle  \operatorname{grad}\varphi^t_i(\vx_k) ,  \tilde{\nabla}_\calR f_i(x_{i,k}) \rangle\\
	\leq &   \sum_{i=1}^n \left(2 \beta_k \left(f_i(x^*) - f_i(\bar{x}_k) + f_i(\bar{x}_k)- f_i(x_{i,k})\right)+ \beta_k(\rho +L)\|x_{i,k} - x^* \|_F^2 \right) + 2\alpha L \beta_k \sqrt{n}L_t \| \vx_k - \bar{\vx}_k\|_F, \\
	\leq & \sum_{i=1}^n \left( 2L\beta_k \|\bar{x}_k - x_{i,k} \|_F + \beta_k(\rho+L)\| x_{i,k} - x^*\|_F^2 \right) - 2n\kappa \beta_k \| \bar{x}_k - x^* \|_F + 2\alpha L \beta_k \sqrt{n}L_t \| \vx_k - \bar{\vx}_k\|_F,
\end{align*}
where the first inequality uses \citet[(C.9)]{chen2021decentralized} and the last inequality is due to the fact in Lemma \ref{lem: riessubgineq} that $f_i$ is $L$-Lipschitz continuous and the fact $\| \bar{x}_k - x^*\|_F \leq 2e_0 \leq \frac{2B}{\Gamma} <B $ by \eqref{eq: dist-barx-xopt-usedinproof} and \eqref{eq: e0-def}.

Plugging the above bounds of two parts into \eqref{eq: sharptheoryiter} gives
\begin{align}
	& \sum_{i=1}^n \| {x}_{i,k+1} - x^*\|_F^2 \nonumber \\
	\leq &  \sum_{i=1}^n \left((1+ \beta_k(\rho+L))\| x_{i,k} - x^*\|_F^2 + 2L\beta_k \| x_{i,k} -\bar{x}_k  \|_F  \right) - 2n\kappa \beta_k \| \bar{x}_k - x^* \|_F \nonumber \\
	& +  \left(4\alpha \sqrt{r} + 2\alpha^2 r\right)L_t \|\vx_k - \bar{\vx}_k\|_F^2+ 2\alpha L \beta_k \sqrt{n}L_t \| \vx_k - \bar{\vx}_k\|_F + n \beta_k^2 L^2 \nonumber \\
	\leq & \sum_{i=1}^n \left((1+ \beta_k(\rho+L))\| x_{i,k} - x^*\|_F^2 - 2\kappa \beta_k \| x_{i,k} -x^*  \|_F   + 2(L+\kappa) \beta_k \| x_{i,k} - \bar{x}_k \|_F \right) \label{eq: sharpinductionconpre} \\
	& + \left(4\alpha \sqrt{r} + 2\alpha^2 r\right)L_t \|\vx_k - \bar{\vx}_k\|_F^2+ 2\alpha L \beta_k \sqrt{n}L_t \| \vx_k - \bar{\vx}_k\|_F + n \beta_k^2 L^2, \nonumber
\end{align}
where the second inequality is due to $n\|\bar{x}_k - x^*\|_F \geq \sum_{i=1}^n (\| x_{i,k} - x^*\|_F - \|x_{i,k} - \bar{x}_k \|_F)$. Now, we derive an upper bound for the term 
\[
\sum_{i=1}^n \left((1+ \mu_0(\rho+L))\| x_{i,k} - x^*\|_F^2 - 2\kappa \beta_k \| x_{i,k} -x^*  \|_F \right).
\]
From \eqref{eq: e0-def} and \eqref{eq: sharpnessthmcondition1}, it holds 
$2\kappa - (\rho+L)e_0 > 0$. 
By
\begin{align}
	\mu_0 \leq  \frac{e_0}{2\kappa - (\rho+L)e_0},
\end{align}
we have $\frac{2\kappa \beta_k}{1+\mu_0(\rho+L)} = \frac{2\kappa \mu_0 \gamma^k}{1+\mu_0(\rho+L)} \leq e_0 \gamma^k$. By invoking \citet[Lemma A.1]{chen2021distributed} (with $a = e_0 \gamma^k$, $b= \frac{\kappa \beta_k}{ 1+ \mu_0(\rho + L) } $, and $c = \Gamma$ in the lemma), we obtain
\begin{align}
	& \left(1+ \mu_0(\rho + L) \right)  \sum_{i=1}^n \left(  \| x_{i,k} - x^*\|_F^2 - \frac{2 \kappa \beta_k}{\left( 1+ \mu_0(\rho + L) \right) }  \|x_{i,k} - x^* \|_F \right) \nonumber \\
	\leq & \left(1+ \mu_0(\rho + L) \right)ne_0^2\gamma^{2k} - \frac{2n}{\Gamma}\kappa \beta_k e_0 \gamma^k.  \label{eq: sharp optdistquad upperbound}
\end{align}

Combining \eqref{eq: sharp optdistquad upperbound}, \eqref{eq: sharpinductionconpre} with the consensus error in \eqref{eq: consensuserrorsharp} yields
\begin{align}
	&\sum_{i=1}^n \| {x}_{i,k+1} - x^*\|_F^2 \nonumber\\
	\leq & \left(1+ \mu_0(\rho + L) \right)ne_0^2\gamma^{2k} - \frac{2n}{\Gamma}\kappa \beta_k e_0 \gamma^k + 2\sqrt{n}(L+\kappa)\beta_k  \| \vx_k - \bar{\vx}_k\|_F \nonumber\\
	& + \left(4\alpha \sqrt{r} + 2\alpha^2r \right)L_t \|\vx_k - \bar{\vx}_k\|_F^2 + 2\alpha L \beta_k \sqrt{n}L_t \| \vx_k - \bar{\vx}_k\|_F + n \beta_k^2 L^2 \nonumber \\
	\leq & \left(1+ \mu_0(\rho + L) \right)ne_0^2\gamma^{2k} - \frac{2n}{\Gamma}\kappa \mu_0 e_0 \gamma^{2k}
	+ \left( 2\sqrt{n}(L+\kappa)+2\alpha\sqrt{n}LL_t \right)\mu_0\gamma^k  \left(  \frac{\mu_0 \sqrt{n}L}{\gamma(1-\gamma^{\frac{1}{\delta}-1})} \right) \gamma^{k} \nonumber  \\
	& +  \left(4\alpha \sqrt{r} + 2\alpha^2r \right)L_t \left(  \frac{\mu_0 \sqrt{n}L}{\gamma(1-\gamma^{\frac{1}{\delta}-1})} \right)^2 \gamma^{2k} + n \mu_0^2 L^2 \gamma^{2k}. \label{eq: sharpquaduppbound}
\end{align} 

Grouping the terms related to $\mu_0$ in \eqref{eq: sharpquaduppbound} gives
\begin{align*}
	\sum_{i=1}^n \| {x}_{i,k+1} - x^*\|_F^2 
	\leq ne_0^2 \gamma^{2k} \left(1  -\frac{q}{e_0^2} \mu_0  + \frac{   \frac{2(L+\kappa +\alpha LL_t)L}{\gamma(1-\gamma^{1/\delta-1})} + \frac{(4\alpha \sqrt{r} + 2 \alpha^2r)L_tL^2}{\gamma^2(1-\gamma^{1/\delta-1})^2}  + L^2  }{e_0^2} \mu_0^2\right),
\end{align*}
where $q= \frac{2\kappa e_0}{\Gamma} - (\rho+L)e_0^2$.
By the fact that $e_0 < \frac{2\kappa}{\Gamma(\rho+L)}$, $q >0$ can be guaranteed.
Note that $\gamma \in (0,1)$ and if the following condition holds
\begin{equation}
	\gamma^2 \geq
	1  -\frac{q}{e_0^2} \mu_0+  \frac{   \frac{2(L+\kappa +\alpha LL_t)L}{\gamma(1-\gamma^{1/\delta-1})} + \frac{(4\alpha \sqrt{r} + \alpha^2/2)L_tL^2}{\gamma^2(1-\gamma^{1/\delta-1})^2}  + L^2  }{e_0^2} \mu_0^2,
	\label{eq: sharpness last condition proof}
\end{equation}
we readily obtain
\begin{align*}
	\sum_{i=1}^n \| {x}_{i,k+1} - x^*\|_F^2 \leq n e_0^2 \gamma^{2(k+1)}.
\end{align*}
Proof of \eqref{eq: sharpness last condition proof}: Since $0 < \rho_t^{\delta} - \rho_t < 1$, it suffices to prove the following inequality,
\begin{equation}
	\label{eq: sharpness last condition proof1}
	\rho_t^{2\delta} \geq 1  -\frac{q}{e_0^2} \mu_0 + \frac{L^2}{e_0^2}\mu_0^2 + \frac{a+b}{(\rho_t^\delta - \rho_t)^2e_0^2} \mu_0^2,
\end{equation}
where $a$ and $b$ are defined in \eqref{eq: ab-def}. 
From the condition $\mu_0 < \frac{q}{L^2e_0^2 + \frac{4(a+b)}{(1-\rho_t)^2}}$ given in \eqref{eq: assummu0}, we obtain there exists a sufficiently small $\delta \in (0,1)$ such that 
\begin{equation} \label{eq: pfrhot-delta-1/2}
	\rho_t^\delta - \rho_t \geq \frac{1-\rho_t}{2}
\end{equation}
and
\begin{align*}
	1  -\frac{q}{e_0^2} \mu_0 + \frac{L^2}{e_0^2}\mu_0^2 + \frac{a+b}{(\rho_t^\delta - \rho_t)^2e_0^2} \mu_0^2
	\leq  1  -\frac{q}{e_0^2} \mu_0 + \left(L^2+\frac{4(a+b)}{(1-\rho_t)^2}\right)\frac{\mu_0^2}{e_0^2}
	<1.
\end{align*}
Taking a sufficiently small $\delta \in (0,1)$ gives \eqref{eq: sharpness last condition proof1}.

Last, we verify \eqref{eq: sharpconclu2} for $k+1$. According to $\| \bar{x}_{k+1} - x^* \|_F^2 \leq \frac{4}{n} \sum_{i=1}^n \| x_{i,k+1} - x^* \|_F^2 \leq 4e_0^2 \gamma^{2k+2}$ (by Lemma \ref{lem: liplikepro}), we have
\begin{align*}
	\| x_{i,k+1} - x^* \|_F
	\leq & \| x_{i,k+1} - \bar{x}_{k+1} \|_F + \| \bar{x}_{k+1} - x^* \|_F
	\leq \left( \frac{\mu_0 \sqrt{n}L}{\gamma(1-\gamma^{\frac{1}{\delta}-1})} \right) \gamma^{k+1} + 2e_0 \gamma^{k+1}\\
	\leq & \left( \frac{\mu_0 \sqrt{n}L}{\rho_t^\delta - \rho_t} + 2e_0 \right) \gamma^{k+1}
	\leq \left( \frac{2\mu_0 \sqrt{n}L}{1 - \rho_t} + 2e_0 \right) \gamma^{k+1} 
	\leq \Gamma e_0 \gamma^{k+1},
\end{align*}
where the second inequality comes from \eqref{eq: consensuserrorsharp}, the third inequality is by $\gamma = \rho_t^\delta$, the fourth inequality is by \eqref{eq: pfrhot-delta-1/2}, and the last inequality is due to \eqref{eq: assummu0}.
\end{proof}

Some comments on Theorem \ref{thm: sharplinear} are in order.
\begin{enumerate}[i)]
\item The condition \eqref{eq: sharpnessthmcondition1} requires that the initial points $x_{i,0}, i\in [n]$ should be all close to $x^*$. One can simply initialize all the agents with the same value, i.e., $x_{1,0} = x_{2,0}=\dots=x_{n,0}$, to satisfy the condition \eqref{eq: sharpnessthmcondition3}. 
\item An immediate conclusion is that 
\begin{align}
	\|\bar{x}_k - x^* \|_F^2 \leq 4\|\hat{x}_k -x^*\|_F^2 \leq \frac{4}{n} \sum_{i=1}^n\|x_{i,k}-x^*\|_F^2 \leq 4 \gamma^{2k}e_0^2, \label{eq: dist-barx-xopt-usedinproof}
\end{align}
where the first inequality comes from Lemma \ref{lem: liplikepro} and the last inequality is from \eqref{eq: sharpconclu1}.
\item The key difference between the Euclidean setting and Stiefel manifold settings when proving the local linear convergence under the sharpness condition lies in the following distinct inequalities. Since the consensus step performs linear operations in the Euclidean setting, one can show that
\begin{equation}
	\sum_{i=1}^n \| \sum_{j=1}^n W_{ij}^t x_{j,k} - x^* \|_F^2 \leq \sum_{i=1}^{n} \left\| x_{i,k} - x^* \right\|^2_F
\end{equation} 
without too much difficulty.
However, in the Stiefel manifold setting, as the consensus step involves not just linear operations, we need to carefully provide an upper bound for $\sum_{i=1}^n  \| x_{i,k} - \alpha \operatorname{grad} \varphi^t_i(\vx_k) - x^* \|_F^2$.
\end{enumerate}

\section{Numerical Experiments}
We conduct numerical experiments on the decentralized dual principal component pursuit (DPCP) problem as well as the decentralized orthogonal dictionary learning (ODL) problem to compare our DRSM algorithm with its centralized counterpart~(CRSM)~\citep{li2021weakly}.

Throughout the experiments, for the network topology, we consider three different choices: A complete graph, a ring graph, and an Erd\"os-R\'enyi (ER) random graph where each possible edge is generated independently with probability $0.3$.

\subsection{Dual principal component pursuit (DPCP)}
In the DPCP problem, one is given some measurements $\tilde{Y}= [Y\ O]\Gamma \in \R^{d \times m}$, where the columns of $Y \in \R^{d \times m_1}$ form inlier points spanning a $(d-r)$-dimensional subspace $\calS$, the columns of $O \in \R^{d \times m_2}$ form outlier points with no linear structure, and $\Gamma \in \R^{m \times m}$ is an unknown permutation. To recover the subspace $\calS$ (or $\calS^\perp$), one aims to solve
\begin{equation}\label{eq: DPCPpro}
\begin{aligned}
	& \min_{X \in \R^{nd \times r}} \;f(X) \coloneqq  \frac{1}{n} \sum_{i=1}^n \left( \frac{1}{N} \sum_{j=1}^N \left\| (\tilde{y}_{i,j})^\top X_i  \right\|_2 \right) \\
	& \text { s.t. } \ X_{1}= X_{2}=\dots=X_{n},\, X_{i} \in {\rm St}(d,r), \, \forall i \in [n],
\end{aligned}
\end{equation}
where $X^\top \coloneqq (X_1^\top, X_2^\top,\dots,X_n^\top)$, $m = n \times N$, and $\tilde{y}_{i,j} \in \R^{d}$ is the $j$-th column vector of the data in the $i$-th local node. 

We generate the measurements $\tilde{Y}$ following the work of \citet{li2021weakly} with $d=100$ and $r=10$. Specifically, the randomly generated subspace $\calS$ has dimension $(d-r)$ in a $d$-dimensional ambient space. Then, we generate $m_1=1500$ inliers uniformly at random from the unit sphere in $\calS$ and $m_2=3500$ outliers uniformly at random from the unit sphere in $\R^d$. After that, we randomly allocate those $m$ column vectors to $n=10$ local nodes such that each node has $N=500$ column vectors. The initialization is set to satisfy $X_1 = \dots = X_n$ and we randomly generate $X_1$ on ${\rm St}(d,r)$.

The DPCP problem is weakly convex and possesses the sharpness property with high probability under suitable conditions \citep{li2021weakly}. In our experiments, suppose that the underlying subspace $\calS^\perp$ is the column space of a matrix $X_{\rm true} \in {\rm St}(d,r)$. We measure the performance by the distance between the IMA in the $k$-th iteration and the low-dimensional subspace $\calS^\perp$, that is
\[
{\rm dist}(\bar{X}_k, \calS^\perp) = \min_{Q\in O(r)} \| \bar{X}_k Q - X_{\rm true} \|_F,
\]
where $O(r)$ represents the set of $r \times r$ orthogonal matrices.

First, we solve the DPCP problem by both DRSM and CRSM with diminishing stepsizes. In each epoch $k$, the stepsize for both algorithms is set to be $\beta_k = 0.05/\sqrt{k}$. Moreover, we simply set $t=1$ for the multistep consensus in our proposed DRSM. We demonstrate the sublinear convergence of DRSM and CRSM in Figure~\ref{fig:dpcp experiments}(a). We show that DRSM converges faster on the complete graph than on the ring and ER graphs. Besides, it is interesting to observe that the performance of DRSM on the complete graph is even better than that of CRSM.

Then, we present the linear convergence rate of DRSM with exponentially decaying stepsizes. In each epoch $k$, we set the stepsize for DRSM as $\beta_k = 0.05\times 0.98^{k}$, and for CRSM as $\beta_k = 0.05\times 0.9^{k}$. Again, we set $t=1$ for the multistep consensus in our DRSM. The convergence results are shown in Figure~\ref{fig:dpcp experiments}(b). From Figure~\ref{fig:dpcp experiments}(b), our proposed DRSM converges linearly in all three graphs, which is in line with our theoretical analysis. In Figure~\ref{fig:dpcp experiments}(c), we show the convergence performance of DRSM with geometrically diminishing stepsizes and varying $t$. It can be observed that the convergence behaviors of DRSM with different $t$ are similar.

\begin{figure*}[t]
\centering
\subfigure[Polynomial diminishing stepsizes]{\includegraphics[width=0.32\linewidth]{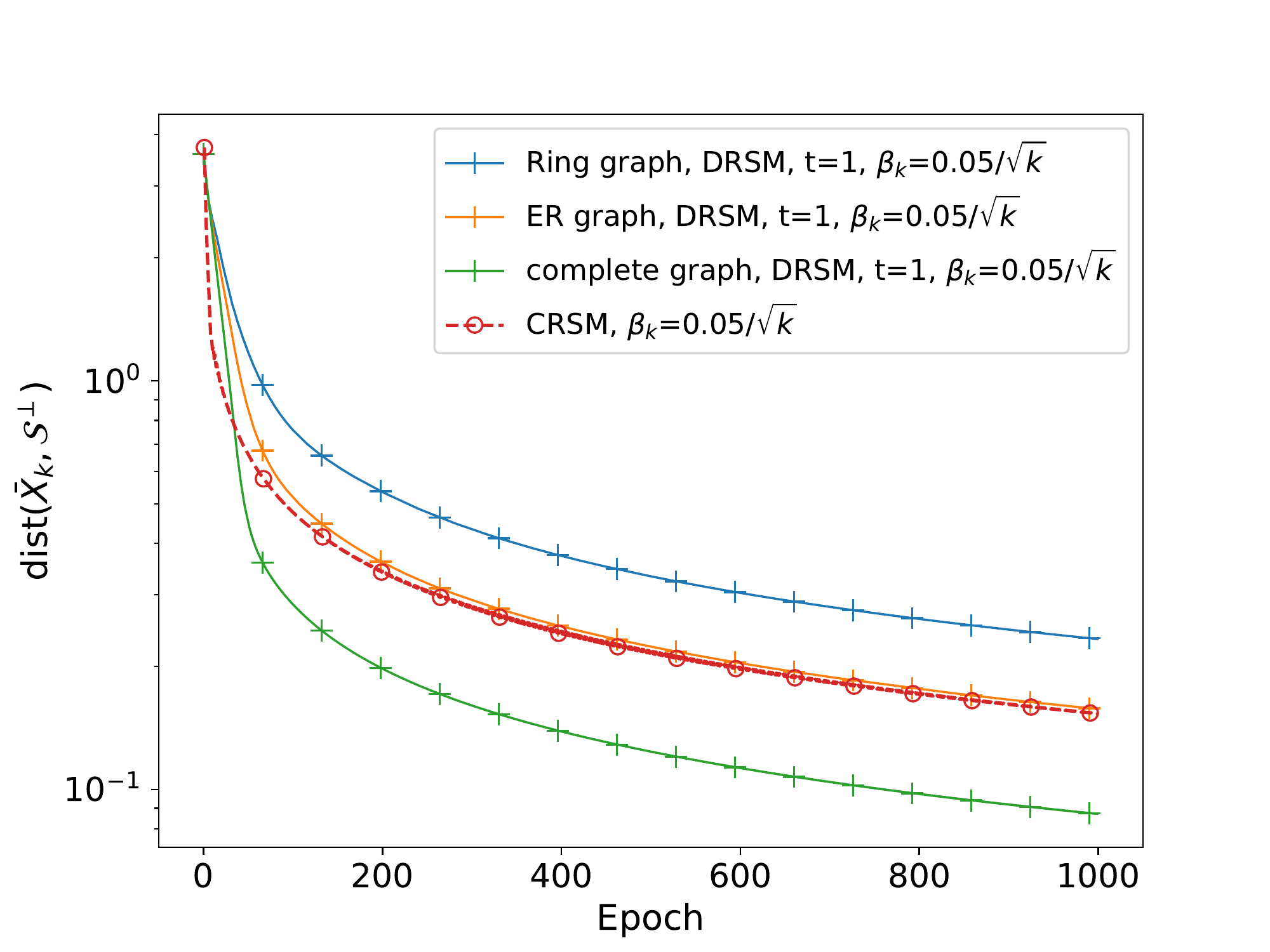}}
\subfigure[Geometrically diminishing stepsizes]{\includegraphics[width=0.32\linewidth]{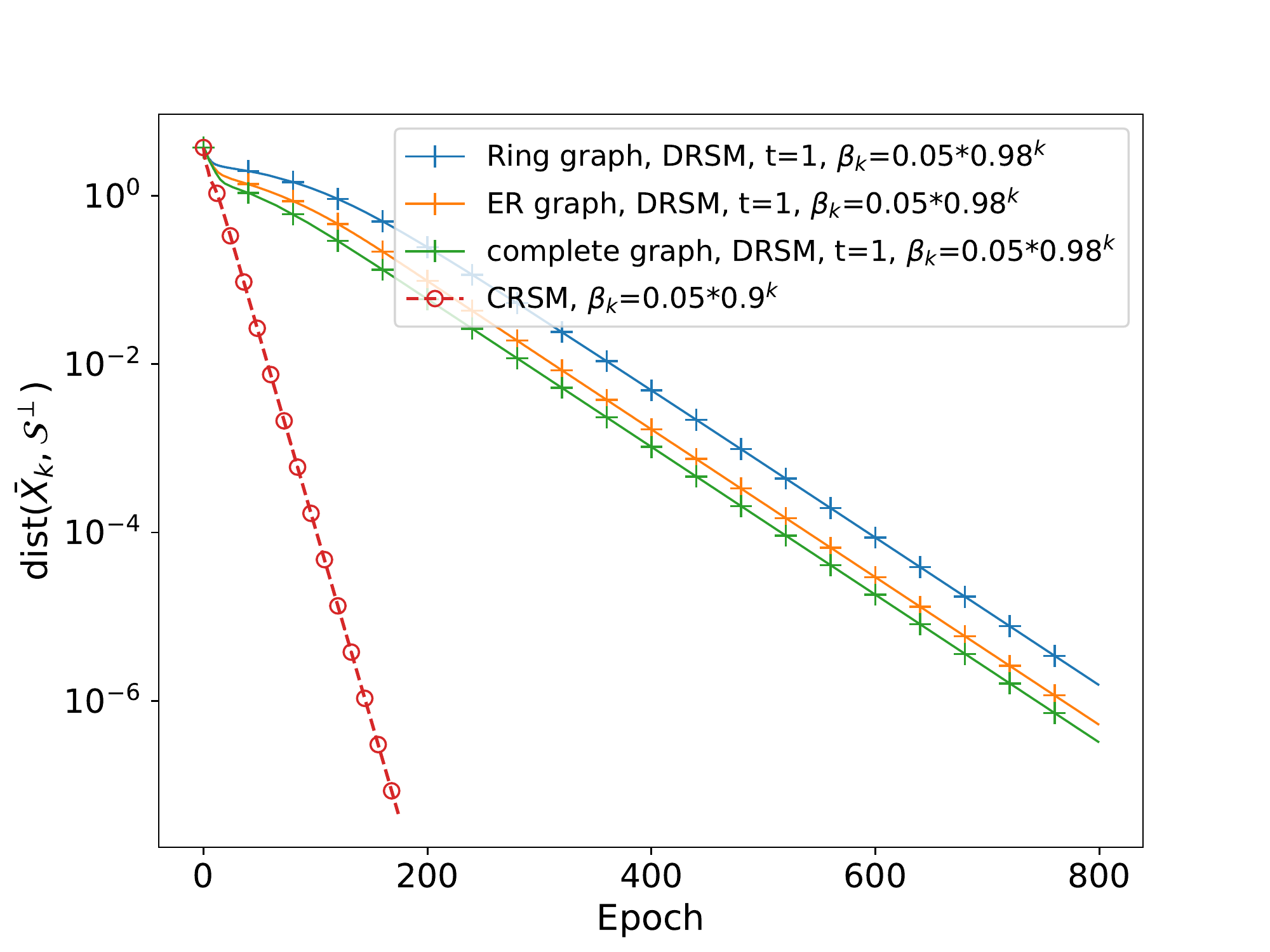}}
\subfigure[Geometrically diminishing stepsizes and different $t$]{\includegraphics[width=0.32\linewidth]{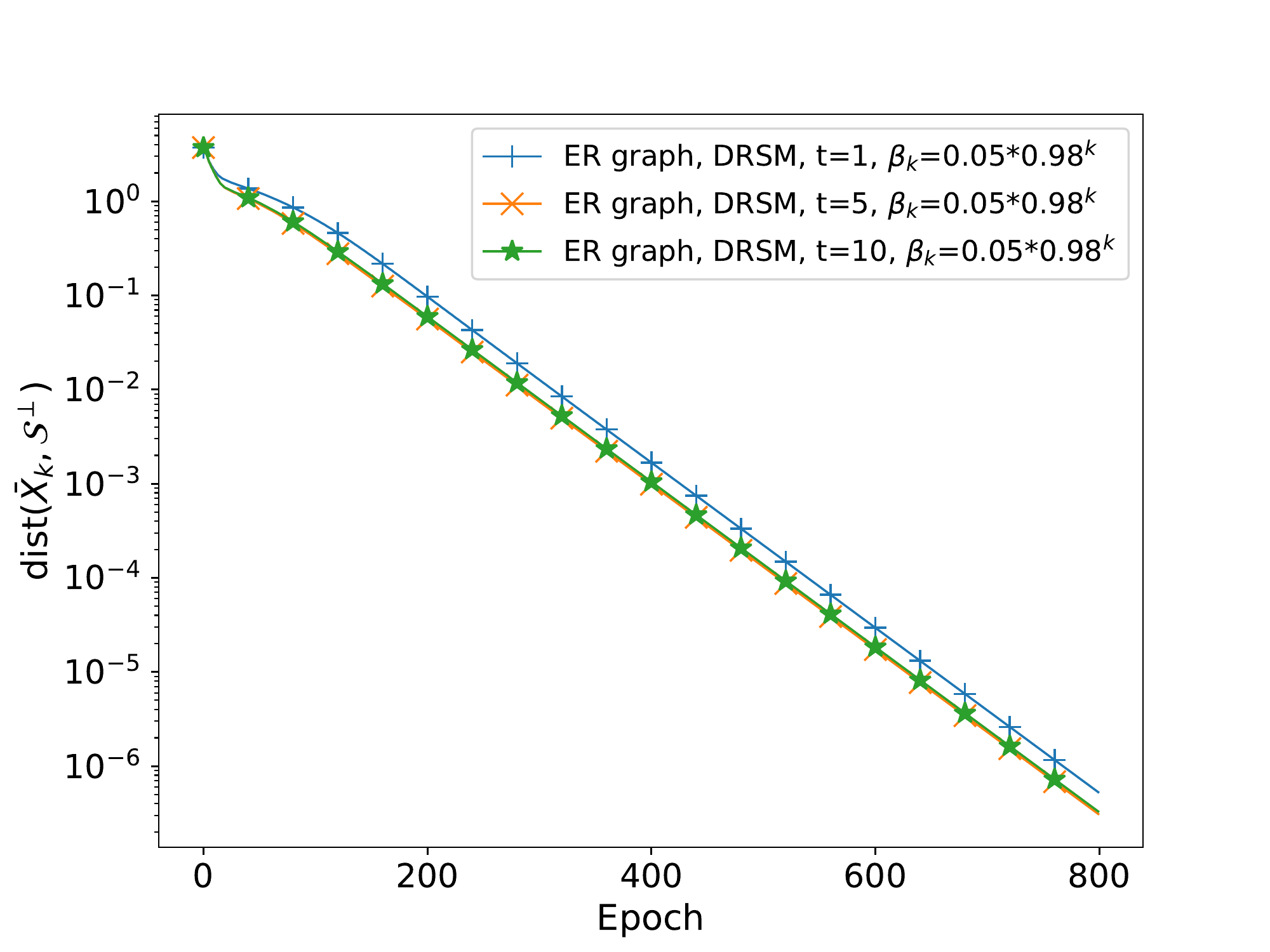}}
\vskip -1.0em
\caption{Convergence performance of Riemannian subgradient-type methods for the DPCP formulation.}
\label{fig:dpcp experiments}
\end{figure*}

\begin{figure*}[t]
\centering
\subfigure[Polynomial diminishing stepsizes]{\includegraphics[width=0.32\linewidth]{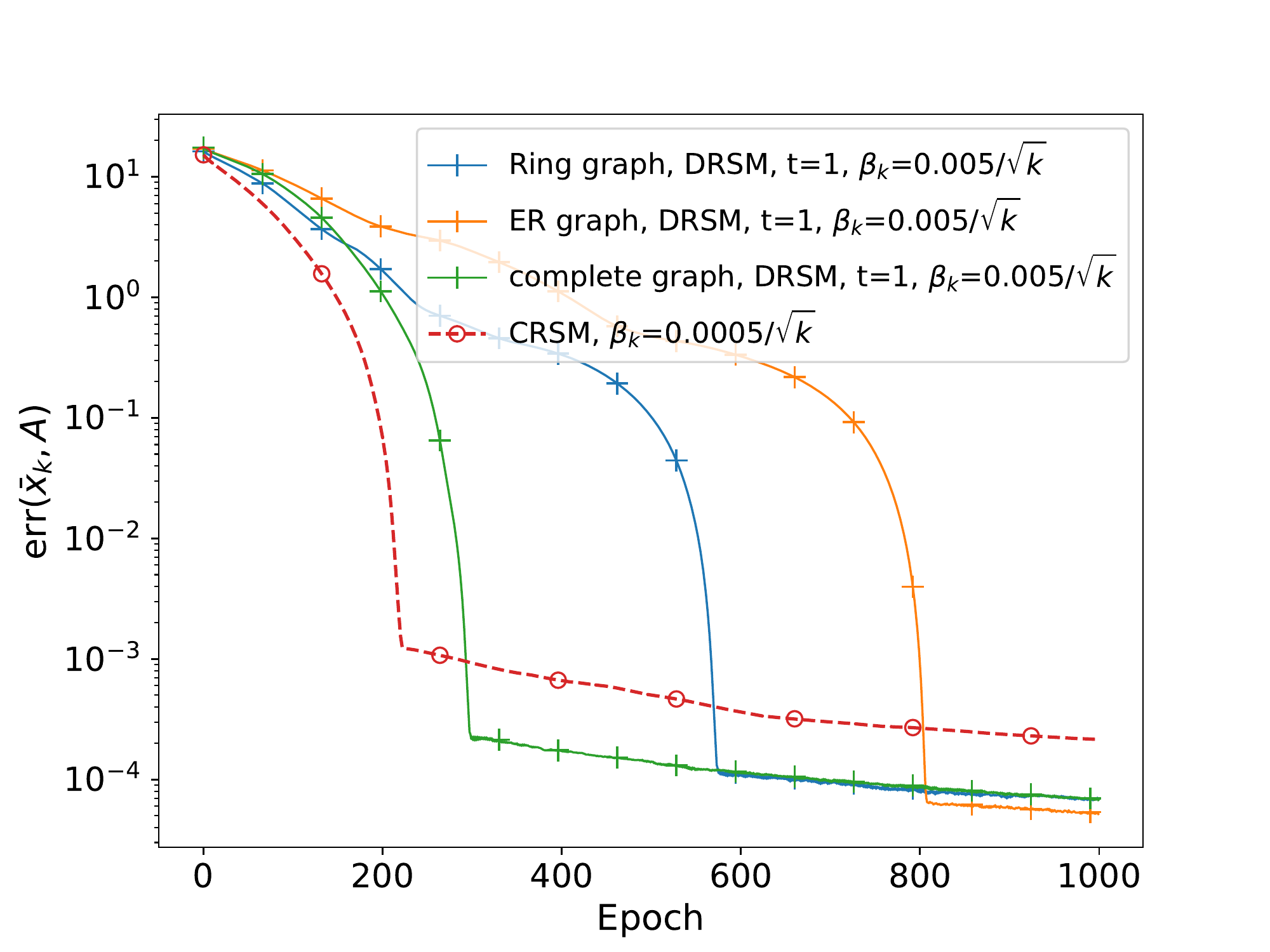}}
\subfigure[Geometrically diminishing stepsizes]{\includegraphics[width=0.32\linewidth]{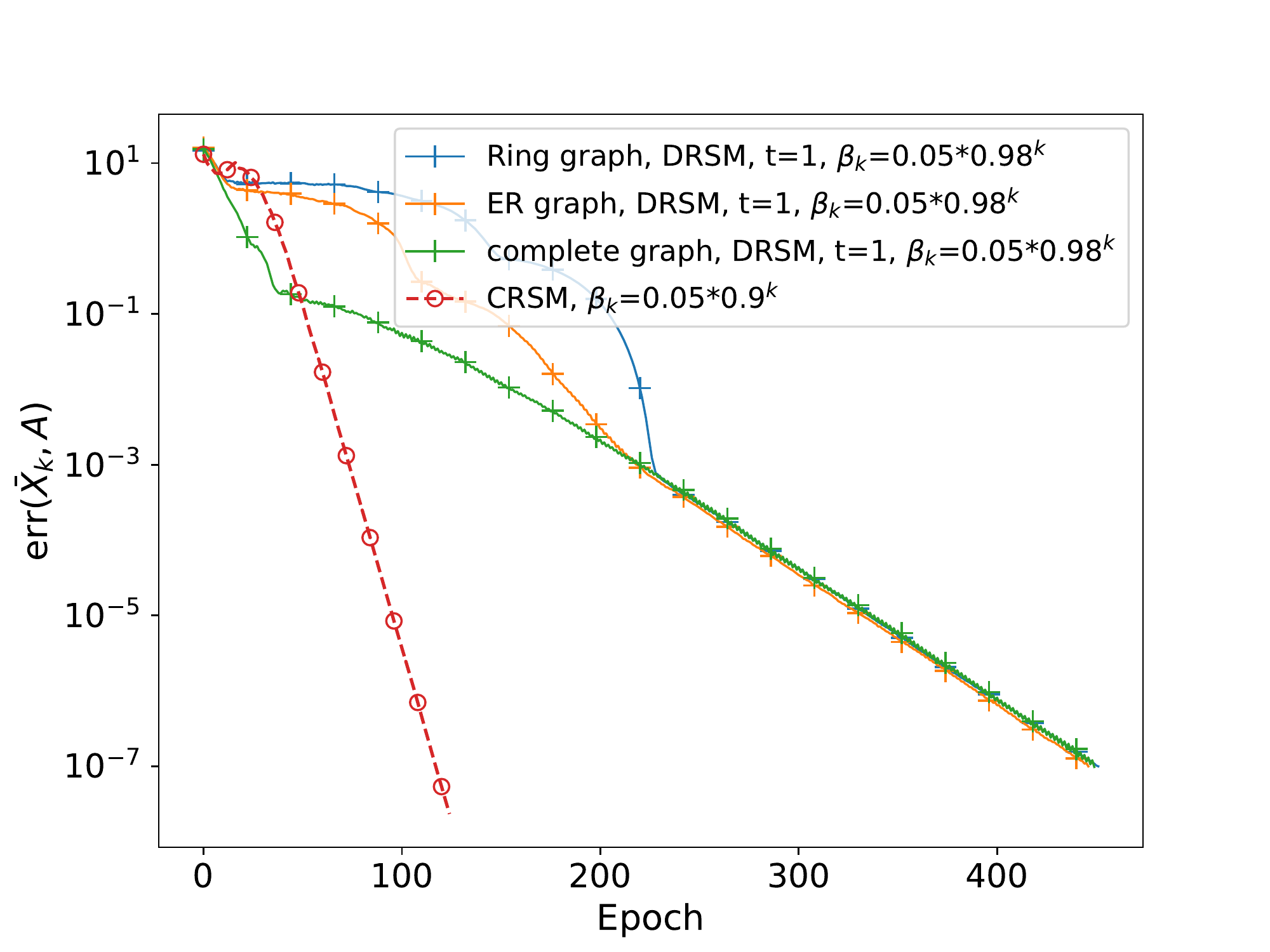}}
\subfigure[Geometrically diminishing stepsizes and different $t$]{\includegraphics[width=0.32\linewidth]{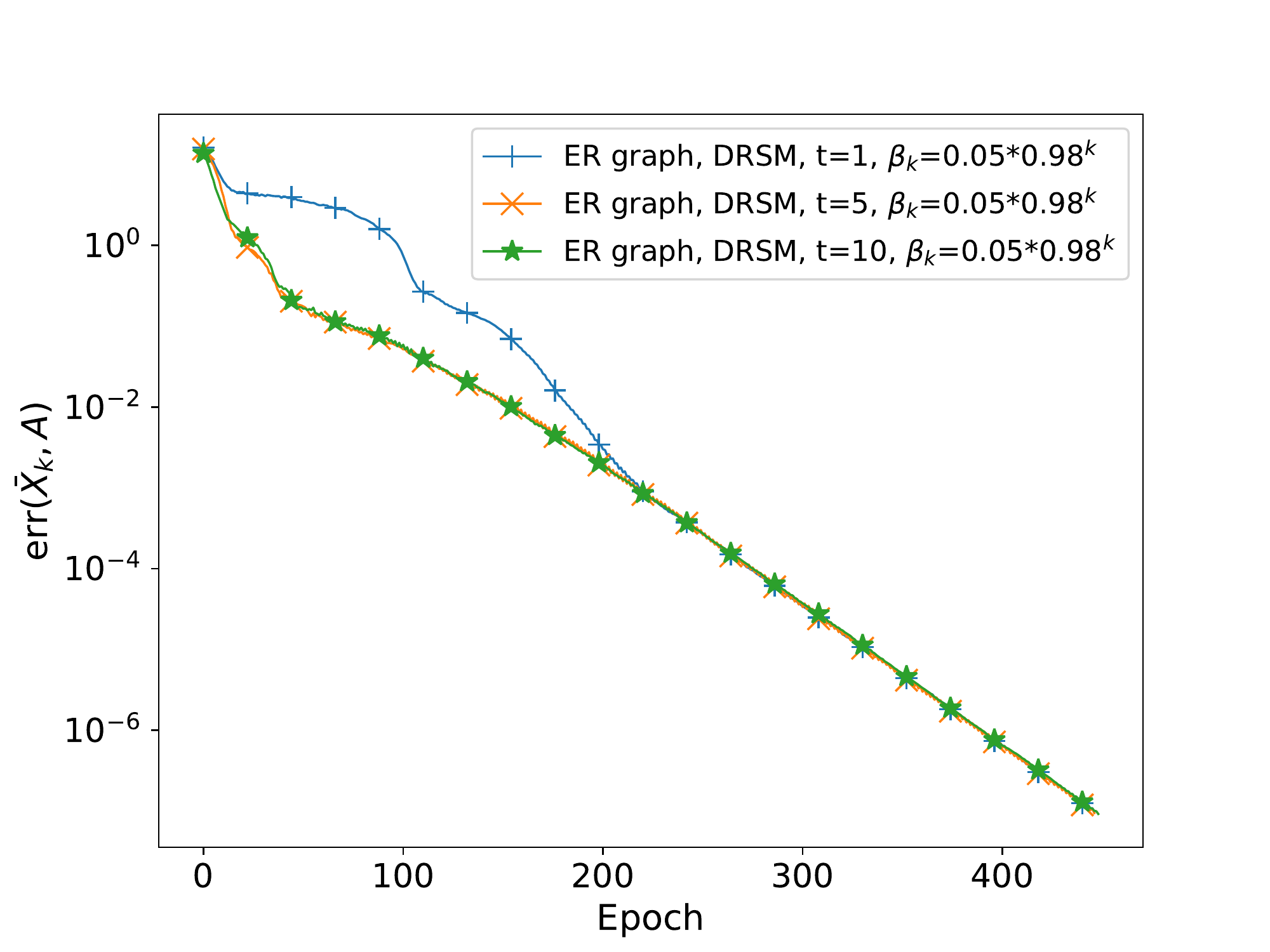}}
\vskip -1.0em
\caption{Convergence performance of Riemannian subgradient-type methods for the ODL formulation.}
\label{fig:odl experiments}
\end{figure*}

\subsection{Orthogonal dictionary learning (ODL)} 
For the ODL problem, the goal is to obtain a suitable compact representation of the observed data $Y\in \R^{d \times m}$. Assuming that the observation $Y$ can be approximated by $Y \approx AS$, where $A \in {\rm St}(d,d)$ represents the underlying orthogonal dictionary to be estimated and each column of $S\in \R^{d \times m}$ is sparse, we try to recover the entire dictionary $A$ by considering the formulation
\begin{equation}\label{eq: ODLpro}
\begin{aligned}
	& \min_{X \in \R^{nd \times d}} \;f(X) \coloneqq  \frac{1}{n} \sum_{i=1}^n \left( \frac{1}{N} \sum_{j=1}^N  \left\| (y_{i,j})^\top X_i  \right\|_1 \right) \\
	& \text { s.t. } \ X_{1}= X_{2}=\dots=X_{n},\, X_{i} \in {\rm St}(d,d), \, \forall i \in [n],
\end{aligned}
\end{equation}
where $X^\top = (X_1^\top, X_2^\top,\dots,X_n^\top)$, $m = n \times N$, and ${y_{i,j}} \in \R^{d}$ is the $j$-th column vector of the data in the $i$-th local node. 

We generate the underlying orthogonal dictionary $A \in {\rm St}(d,d)$ with $d= 30$ randomly and observe $m =1650 \approx 10 \times d^{1.5}$ instances by $Y = AS$, where each element of the sparse matrix $S$ is generated by the Bernoulli-Gaussian distribution with parameter $0.3$. Then, we randomly allocate these $m$ columns of $Y$ to $n=10$ local nodes with $N=165$ column vectors on each node. We also use random Gaussian initialization to generate $X_1 \in {\rm St}(d,d)$ and set $X_1 = \dots = X_n$. The performance measure is defined as the error between $\bar{X}$ and $A$; i.e., ${\rm err}(\bar{X},A) = \sum_{i=1}^d |\max_{1\leq j \leq d} |[\bar{X}_i^\top A]_j|-1 |$.

We first utilize the DRSM and CRSM to solve the ODL problem with diminishing stepsizes. Specifically, at each epoch $k$, we set the step size as $\beta_{k}=0.005/\sqrt{k}$ and $0.0005/\sqrt{k}$ for DRSM and CRSM, respectively. Figure~\ref{fig:odl experiments}(a) shows the sublinear convergence of our proposed DRSM in all three graphs. Similarly, we can observe that DRSM algorithm on the complete graph converges faster than the ring and ER graphs.

Then, Figure~\ref{fig:odl experiments}(b) shows the linear convergence of DRSM and CRSM when exponentially diminishing stepsizes of the form $\beta_k=\mu_0\gamma^k$ are used. Here, $\mu_0=0.05, \gamma=0.98$ are chosen for DRSM, and $\mu_0=0.05, \gamma=0.9$ are chosen for CRSM. We also show the performance of DRSM with geometrically diminishing stepsizes and varying $t$ in Figure~\ref{fig:odl experiments}(c). 
We can observe that DRSM with $t=5$ or $10$ converges faster in the initial process than DRSM with $t=1$ and they behave similarly later.

\section{Concluding Remarks}\label{sec:con}
We proposed the decentralized Riemannian subgradient method (DRSM) for solving decentralized weakly convex (possibly non-smooth) optimization problems over the Stiefel manifold and established its global convergence and iteration complexity. Besides, the method enjoys a local linear convergence rate if the problem at hand exhibits the sharpness property. Future directions include exploring practical optimization problems over other embedded manifolds and provably alleviating the communication burden since multiple rounds of communications are required per iteration in DRSM.

\bibliography{decen-weakcov-stiefel.bib}

\begin{thebibliography}{49}
\providecommand{\natexlab}[1]{#1}
\providecommand{\url}[1]{\texttt{#1}}
\expandafter\ifx\csname urlstyle\endcsname\relax
  \providecommand{\doi}[1]{doi: #1}\else
  \providecommand{\doi}{doi: \begingroup \urlstyle{rm}\Url}\fi

\bibitem[Absil et~al.(2009)Absil, Mahony, and Sepulchre]{absil2009optimization}
P.-A. Absil, R.~Mahony, and R.~Sepulchre.
\newblock \emph{Optimization Algorithms on Matrix Manifolds}.
\newblock Princeton University Press, 2009.

\bibitem[Balashov and Tremba(2020)]{balashov2020error}
M.~Balashov and A.~Tremba.
\newblock Error bound conditions and convergence of optimization methods on
  smooth and proximally smooth manifolds.
\newblock \emph{Optimization}, pages 1--25, 2020.

\bibitem[Boumal et~al.(2019)Boumal, Absil, and Cartis]{boumal2019global}
N.~Boumal, P.-A. Absil, and C.~Cartis.
\newblock Global rates of convergence for nonconvex optimization on manifolds.
\newblock \emph{IMA Journal of Numerical Analysis}, 39\penalty0 (1):\penalty0
  1--33, 2019.

\bibitem[Burke and Ferris(1993)]{burke1993weak}
J.~V. Burke and M.~C. Ferris.
\newblock Weak sharp minima in mathematical programming.
\newblock \emph{SIAM Journal on Control and Optimization}, 31\penalty0
  (5):\penalty0 1340--1359, 1993.

\bibitem[Chen et~al.(2020)Chen, Ma, Man-Cho~So, and Zhang]{chen2020proximal}
S.~Chen, S.~Ma, A.~Man-Cho~So, and T.~Zhang.
\newblock Proximal gradient method for nonsmooth optimization over the
  {S}tiefel manifold.
\newblock \emph{SIAM Journal on Optimization}, 30\penalty0 (1):\penalty0
  210--239, 2020.

\bibitem[Chen et~al.(2021{\natexlab{a}})Chen, Deng, Ma, and
  So]{chen2021manifold}
S.~Chen, Z.~Deng, S.~Ma, and A.~M.-C. So.
\newblock Manifold proximal point algorithms for dual principal component
  pursuit and orthogonal dictionary learning.
\newblock \emph{IEEE Transactions on Signal Processing}, 69:\penalty0
  4759--4773, 2021{\natexlab{a}}.

\bibitem[Chen et~al.(2021{\natexlab{b}})Chen, Garcia, Hong, and
  Shahrampour]{chen2021decentralized}
S.~Chen, A.~Garcia, M.~Hong, and S.~Shahrampour.
\newblock Decentralized {R}iemannian gradient descent on the {S}tiefel
  manifold.
\newblock \emph{arXiv preprint arXiv:2102.07091}, 2021{\natexlab{b}}.

\bibitem[Chen et~al.(2021{\natexlab{c}})Chen, Garcia, Hong, and
  Shahrampour]{chen2021local}
S.~Chen, A.~Garcia, M.~Hong, and S.~Shahrampour.
\newblock On the local linear rate of consensus on the {S}tiefel manifold.
\newblock \emph{arXiv preprint arXiv:2101.09346}, 2021{\natexlab{c}}.

\bibitem[Chen et~al.(2021{\natexlab{d}})Chen, Garcia, and
  Shahrampour]{chen2021distributed}
S.~Chen, A.~Garcia, and S.~Shahrampour.
\newblock On distributed non-convex optimization: Projected subgradient method
  for weakly convex problems in networks.
\newblock \emph{IEEE Transactions on Automatic Control}, 2021{\natexlab{d}}.

\bibitem[Clarke et~al.(1995)Clarke, Stern, and Wolenski]{clarke1995proximal}
F.~H. Clarke, R.~Stern, and P.~Wolenski.
\newblock Proximal smoothness and the lower-{$C^2$} property.
\newblock \emph{J. Convex Anal}, 2\penalty0 (1-2):\penalty0 117--144, 1995.

\bibitem[Daneshmand et~al.(2020)Daneshmand, Scutari, and
  Kungurtsev]{daneshmand2020second}
A.~Daneshmand, G.~Scutari, and V.~Kungurtsev.
\newblock Second-order guarantees of distributed gradient algorithms.
\newblock \emph{SIAM Journal on Optimization}, 30\penalty0 (4):\penalty0
  3029--3068, 2020.

\bibitem[Davis et~al.(2018)Davis, Drusvyatskiy, MacPhee, and
  Paquette]{davis2018subgradient}
D.~Davis, D.~Drusvyatskiy, K.~J. MacPhee, and C.~Paquette.
\newblock Subgradient methods for sharp weakly convex functions.
\newblock \emph{Journal of Optimization Theory and Applications}, 179\penalty0
  (3):\penalty0 962--982, 2018.

\bibitem[Davis et~al.(2020)Davis, Drusvyatskiy, and Shi]{davis2020stochastic}
D.~Davis, D.~Drusvyatskiy, and Z.~Shi.
\newblock Stochastic optimization over proximally smooth sets.
\newblock \emph{arXiv preprint arXiv:2002.06309}, 2020.

\bibitem[Duchi et~al.(2011)Duchi, Agarwal, and Wainwright]{duchi2011dual}
J.~C. Duchi, A.~Agarwal, and M.~J. Wainwright.
\newblock Dual averaging for distributed optimization: Convergence analysis and
  network scaling.
\newblock \emph{IEEE Transactions on Automatic Control}, 57\penalty0
  (3):\penalty0 592--606, 2011.

\bibitem[Hu et~al.(2020)Hu, Liu, Wen, and Yuan]{hu2020brief}
J.~Hu, X.~Liu, Z.-W. Wen, and Y.-X. Yuan.
\newblock A brief introduction to manifold optimization.
\newblock \emph{Journal of the Operations Research Society of China},
  8\penalty0 (2):\penalty0 199--248, 2020.

\bibitem[Huang et~al.(2021)Huang, Li, Milzarek, Pu, and
  Qiu]{huang2021distributed}
K.~Huang, X.~Li, A.~Milzarek, S.~Pu, and J.~Qiu.
\newblock Distributed random reshuffling over networks.
\newblock \emph{arXiv preprint arXiv:2112.15287}, 2021.

\bibitem[Karkhaneei and Mahdavi-Amiri(2019)]{karkhaneei2019nonconvex}
M.~M. Karkhaneei and N.~Mahdavi-Amiri.
\newblock Nonconvex weak sharp minima on {R}iemannian manifolds.
\newblock \emph{Journal of Optimization Theory and Applications}, 183\penalty0
  (1):\penalty0 85--104, 2019.

\bibitem[Li et~al.(2011)Li, Mordukhovich, Wang, and Yao]{li2011weak}
C.~Li, B.~S. Mordukhovich, J.~Wang, and J.-C. Yao.
\newblock Weak sharp minima on {R}iemannian manifolds.
\newblock \emph{SIAM Journal on Optimization}, 21\penalty0 (4):\penalty0
  1523--1560, 2011.

\bibitem[Li et~al.(2020{\natexlab{a}})Li, So, and Ma]{li2020understanding}
J.~Li, A.~M.-C. So, and W.-K. Ma.
\newblock Understanding notions of stationarity in nonsmooth optimization: A
  guided tour of various constructions of subdifferential for nonsmooth
  functions.
\newblock \emph{IEEE Signal Processing Magazine}, 37\penalty0 (5):\penalty0
  18--31, 2020{\natexlab{a}}.

\bibitem[Li et~al.(2020{\natexlab{b}})Li, Zhu, Man-Cho~So, and
  Vidal]{li2020nonconvex}
X.~Li, Z.~Zhu, A.~Man-Cho~So, and R.~Vidal.
\newblock Nonconvex robust low-rank matrix recovery.
\newblock \emph{SIAM Journal on Optimization}, 30\penalty0 (1):\penalty0
  660--686, 2020{\natexlab{b}}.

\bibitem[Li et~al.(2021{\natexlab{a}})Li, Chen, Deng, Qu, Zhu, and
  Man-Cho~So]{li2021weakly}
X.~Li, S.~Chen, Z.~Deng, Q.~Qu, Z.~Zhu, and A.~Man-Cho~So.
\newblock Weakly convex optimization over {S}tiefel manifold using {R}iemannian
  subgradient-type methods.
\newblock \emph{SIAM Journal on Optimization}, 31\penalty0 (3):\penalty0
  1605--1634, 2021{\natexlab{a}}.

\bibitem[Li et~al.(2021{\natexlab{b}})Li, Milzarek, and Qiu]{li2021convergence}
X.~Li, A.~Milzarek, and J.~Qiu.
\newblock Convergence of random reshuffling under the
  {K}urdyka-{$\L$}ojasiewicz inequality.
\newblock \emph{arXiv preprint arXiv:2110.04926}, 2021{\natexlab{b}}.

\bibitem[Liu et~al.(2022)Liu, Zhou, Pei, Zhang, and Shi]{liu2022decentralized}
C.~Liu, Z.~Zhou, J.~Pei, Y.~Zhang, and Y.~Shi.
\newblock Decentralized composite optimization in stochastic networks: A dual
  averaging approach with linear convergence.
\newblock \emph{IEEE Transactions on Automatic Control}, 2022.

\bibitem[Liu et~al.(2017)Liu, Yue, and Man-Cho~So]{liu2017estimation}
H.~Liu, M.-C. Yue, and A.~Man-Cho~So.
\newblock On the estimation performance and convergence rate of the generalized
  power method for phase synchronization.
\newblock \emph{SIAM Journal on Optimization}, 27\penalty0 (4):\penalty0
  2426--2446, 2017.

\bibitem[Liu et~al.(2019)Liu, So, and Wu]{liu2019quadratic}
H.~Liu, A.~M.-C. So, and W.~Wu.
\newblock Quadratic optimization with orthogonality constraint: explicit
  {$\L$}ojasiewicz exponent and linear convergence of retraction-based
  line-search and stochastic variance-reduced gradient methods.
\newblock \emph{Mathematical Programming}, 178\penalty0 (1):\penalty0 215--262,
  2019.

\bibitem[Liu et~al.(2020{\natexlab{a}})Liu, Deng, Li, Chen, and
  So]{liu2020nonconvex}
H.~Liu, Z.~Deng, X.~Li, S.~Chen, and A.~M.-C. So.
\newblock Nonconvex robust synchronization of rotations.
\newblock In \emph{NeurIPS Annual Workshop on Optimization for Machine
  Learning}, pages 1--7, 2020{\natexlab{a}}.

\bibitem[Liu et~al.(2020{\natexlab{b}})Liu, Yue, and So]{liu2022unified}
H.~Liu, M.-C. Yue, and A.~M.-C. So.
\newblock A unified approach to synchronization problems over subgroups of the
  orthogonal group.
\newblock \emph{arXiv preprint arXiv:2009.07514}, 2020{\natexlab{b}}.

\bibitem[Luss and Teboulle(2013)]{luss2013conditional}
R.~Luss and M.~Teboulle.
\newblock Conditional gradient algorithms for rank-one matrix approximations
  with a sparsity constraint.
\newblock \emph{SIAM Review}, 55\penalty0 (1):\penalty0 65--98, 2013.

\bibitem[Markdahl et~al.(2020)Markdahl, Thunberg, and
  Goncalves]{markdahl2020high}
J.~Markdahl, J.~Thunberg, and J.~Goncalves.
\newblock High-dimensional {K}uramoto models on {S}tiefel manifolds synchronize
  complex networks almost globally.
\newblock \emph{Automatica}, 113:\penalty0 108736, 2020.

\bibitem[Moreau(1965)]{moreau1965proximite}
J.-J. Moreau.
\newblock Proximit{\'e} et dualit{\'e} dans un espace hilbertien.
\newblock \emph{Bulletin de la Soci{\'e}t{\'e} math{\'e}matique de France},
  93:\penalty0 273--299, 1965.

\bibitem[Nedic et~al.(2010)Nedic, Ozdaglar, and Parrilo]{nedic2010constrained}
A.~Nedic, A.~Ozdaglar, and P.~A. Parrilo.
\newblock Constrained consensus and optimization in multi-agent networks.
\newblock \emph{IEEE Transactions on Automatic Control}, 55\penalty0
  (4):\penalty0 922--938, 2010.

\bibitem[Pillai et~al.(2005)Pillai, Suel, and Cha]{pillai2005perron}
S.~U. Pillai, T.~Suel, and S.~Cha.
\newblock The perron-frobenius theorem: some of its applications.
\newblock \emph{IEEE Signal Processing Magazine}, 22\penalty0 (2):\penalty0
  62--75, 2005.

\bibitem[Polyak(1987)]{polyak1987introduction}
B.~T. Polyak.
\newblock Introduction to optimization. optimization software.
\newblock \emph{Inc., Publications Division, New York}, 1, 1987.

\bibitem[Rockafellar and Wets(2009)]{rockafellar2009variational}
R.~T. Rockafellar and R.~J.-B. Wets.
\newblock \emph{Variational Analysis}, volume 317.
\newblock Springer Science \& Business Media, 2009.

\bibitem[So and Zhou(2017)]{so2017non}
A.~M.-C. So and Z.~Zhou.
\newblock Non-asymptotic convergence analysis of inexact gradient methods for
  machine learning without strong convexity.
\newblock \emph{Optimization Methods and Software}, 32\penalty0 (4):\penalty0
  963--992, 2017.

\bibitem[Tian et~al.(2019)Tian, Sun, and Scutari]{tian2019asynchronous}
Y.~Tian, Y.~Sun, and G.~Scutari.
\newblock Asynchronous decentralized successive convex approximation.
\newblock \emph{arXiv preprint arXiv:1909.10144}, 2019.

\bibitem[Tsitsiklis et~al.(1986)Tsitsiklis, Bertsekas, and
  Athans]{tsitsiklis1986distributed}
J.~Tsitsiklis, D.~Bertsekas, and M.~Athans.
\newblock Distributed asynchronous deterministic and stochastic gradient
  optimization algorithms.
\newblock \emph{IEEE Transactions on Automatic Control}, 31\penalty0
  (9):\penalty0 803--812, 1986.

\bibitem[Wang and Liu(2022)]{wang2022decentralized}
L.~Wang and X.~Liu.
\newblock Decentralized optimization over the {S}tiefel manifold by an
  approximate augmented {L}agrangian function.
\newblock \emph{IEEE Transactions on Signal Processing}, 70:\penalty0
  3029--3041, 2022.

\bibitem[Wang et~al.(2021)Wang, Liu, and So]{wang2021linear}
P.~Wang, H.~Liu, and A.~M.-C. So.
\newblock Linear convergence of a proximal alternating minimization method with
  extrapolation for $\ell_1$-norm principal component analysis.
\newblock \emph{arXiv preprint arXiv:2107.07107}, 2021.

\bibitem[Wang et~al.(2020)Wang, Wu, and Yu]{wang2020unique}
Y.~Wang, S.~Wu, and B.~Yu.
\newblock Unique sharp local minimum in $\ell_1$-minimization complete
  dictionary learning.
\newblock \emph{Journal of Machine Learning Research}, 21:\penalty0 63--1,
  2020.

\bibitem[Wu et~al.(2018)Wu, Wai, Li, and Scaglione]{wu2018review}
S.~X. Wu, H.-T. Wai, L.~Li, and A.~Scaglione.
\newblock A review of distributed algorithms for principal component analysis.
\newblock \emph{Proceedings of the IEEE}, 106\penalty0 (8):\penalty0
  1321--1340, 2018.

\bibitem[Yang et~al.(2014)Yang, Zhang, and Song]{yang2014optimality}
W.~H. Yang, L.-H. Zhang, and R.~Song.
\newblock Optimality conditions for the nonlinear programming problems on
  {R}iemannian manifolds.
\newblock \emph{Pacific Journal of Optimization}, 10\penalty0 (2):\penalty0
  415--434, 2014.

\bibitem[Ye and Zhang(2021)]{ye2021deepca}
H.~Ye and T.~Zhang.
\newblock Deepca: Decentralized exact {PCA} with linear convergence rate.
\newblock \emph{Journal of Machine Learning Research}, 22\penalty0
  (238):\penalty0 1--27, 2021.

\bibitem[Yuan et~al.(2016)Yuan, Ling, and Yin]{yuan2016convergence}
K.~Yuan, Q.~Ling, and W.~Yin.
\newblock On the convergence of decentralized gradient descent.
\newblock \emph{SIAM Journal on Optimization}, 26\penalty0 (3):\penalty0
  1835--1854, 2016.

\bibitem[Yue et~al.(2019)Yue, Zhou, and So]{yue2019family}
M.-C. Yue, Z.~Zhou, and A.~M.-C. So.
\newblock A family of inexact {SQA} methods for non-smooth convex minimization
  with provable convergence guarantees based on the {L}uo--{T}seng error bound
  property.
\newblock \emph{Mathematical Programming}, 174\penalty0 (1):\penalty0 327--358,
  2019.

\bibitem[Zeng and Yin(2018)]{zeng2018nonconvex}
J.~Zeng and W.~Yin.
\newblock On nonconvex decentralized gradient descent.
\newblock \emph{IEEE Transactions on Signal Processing}, 66\penalty0
  (11):\penalty0 2834--2848, 2018.

\bibitem[Zhou and So(2017)]{zhou2017unified}
Z.~Zhou and A.~M.-C. So.
\newblock A unified approach to error bounds for structured convex optimization
  problems.
\newblock \emph{Mathematical Programming}, 165\penalty0 (2):\penalty0 689--728,
  2017.

\bibitem[Zhu et~al.(2021)Zhu, Wang, and So]{zhu2021orthogonal}
L.~Zhu, J.~Wang, and A.~M.-C. So.
\newblock Orthogonal group synchronization with incomplete measurements: Error
  bounds and linear convergence of the generalized power method.
\newblock \emph{arXiv preprint arXiv:2112.06556}, 2021.

\bibitem[Zhu et~al.(2018)Zhu, Wang, Robinson, Naiman, Vidal, and
  Tsakiris]{zhu2018dual}
Z.~Zhu, Y.~Wang, D.~Robinson, D.~Naiman, R.~Vidal, and M.~Tsakiris.
\newblock Dual principal component pursuit: Improved analysis and efficient
  algorithms.
\newblock \emph{Advances in Neural Information Processing Systems}, 31, 2018.

\end{thebibliography}

\end{document}